\documentclass{article}
\usepackage[affil-it]{authblk}
\usepackage[utf8]{inputenc}
\usepackage{amsmath, amsfonts, amsthm, amssymb, latexsym}
\usepackage{geometry}
\usepackage{enumerate}
\usepackage{multirow}
\usepackage{rotating}
\usepackage{graphicx,color}
\usepackage{float}
\usepackage{mathrsfs}

\newtheorem{thm}{Theorem}[section]

\newtheorem{prop}[thm]{Proposition}

\newtheorem*{remark}{Remark}

\title{On inverse problems arising in fractional elasticity}
\author{Li Li}
\affil{Institute for Pure and Applied Mathematics, University of California,\\
Los Angeles, CA 90095, USA}
\date{}

\begin{document}
\maketitle

\noindent \textbf{ABSTRACT.}\, We first formulate an inverse problem for a linear fractional Lam\'e system. We determine the Lam\'e parameters from exterior partial measurements of the Dirichlet-to-Neumann map. We further study an inverse obstacle problem as well as an inverse problem for a nonlinear fractional Lam\'e system. Our arguments are based on the unique continuation property for the fractional operator as well as the associated Runge approximation property.

\section{Introduction}
The classical Lam\'e operator $L_{\lambda, \mu}$ for a three-dimensional isotropic elastic body is given by
$$(L_{\lambda, \mu}u)_i:= \sum^3_{j=1}\partial_i(\lambda u_{j, j})
+ \sum^3_{j=1}\partial_j(\mu (u_{i, j}+ u_{j, i})),\quad (1\leq i\leq 3)$$
where $u_i$ denotes the $i$-th component of the vector-valued displacement function $u$ and $u_{i,j}:= \partial_j u_i$. 
The associated inverse problem has been studied in \cite{eskin2002inverse, nakamura1994global} where the authors considered the Dirichlet problem
$$L_{\lambda, \mu}u= 0\,\,\,\, \text{in}\,\,\Omega,\qquad
u= g\,\,\,\, \text{on}\,\,\partial\Omega.$$
They determined the variable Lam\'e parameters $\lambda, \mu$ from the Dirichlet-to-Neumann (displacement-to-traction) map 
$$(\Lambda g)_i:= \sum^3_{j=1}(\lambda u_{j, j})\nu_i
+ \sum^3_{j=1}\mu (u_{i, j}+ u_{j, i})\nu_j,\quad (1\leq i\leq 3)$$
where $\nu$ is the unit outer normal to $\partial\Omega$ under certain assumptions on $\lambda, \mu$. 

In this paper, we study a fractional analogue of $L_{\lambda, \mu}$ and its associated inverse problems. 

First we recall that the classical elastic model is based on the constitutive relation
$$\sigma_{ij}= \sum_{k,l= 1}^3C_{ijkl}e_{kl}$$
where the fourth-order elastic stiffness tensor is 
$$C_{ijkl}:= \lambda\delta_{ij}\delta_{kl}+\mu(\delta_{ik}\delta_{jl}+ \delta_{il}\delta_{jk}),$$
the linearized strain tensor is
$$e_{ij}:= \frac{1}{2}(u_{i,j}+u_{j,i})$$
and $\sigma_{ij}$ denotes the stress tensor. 
Here $\delta_{ij}$ is the standard Kronecker delta.

Recently, the theory of nonlocal elasticity has attracted much attention. The integral linear constitutive relation
$$\sigma_{ij}= \sum_{k,l= 1}^3C_{ijkl}(K*e_{kl})$$
has been introduced to describe complex materials characterized by nonlocality. 
Then the fractional Taylor series approximation for the Fourier transform of the interaction kernel $K$, which is given by 
$$\hat{K}(|\xi|)\approx \hat{K}(0)+ c_s|\xi|^{2s},\quad (0<s<1)$$
leads to the definition of the fractional Lam\'e operator
\begin{equation}\label{fracLame} 
\mathcal{L}:= L_{\lambda, \mu}+ (-\Delta)^sL_{\lambda_0, \mu_0}.
\end{equation}
See \cite{tarasov2019fractional}
and the references there for more background information.
In \cite{tarasov2019fractional}, $\lambda, \mu$ are constants proportional to $\lambda_0, \mu_0$ but in this paper we allow $\lambda, \mu$ to be variable functions.

We consider the exterior Dirichlet problem
\begin{equation}\label{fracLameext}
\mathcal{L}u= 0\,\,\,\, \text{in}\,\,\Omega,\qquad
u= g\,\,\,\, \text{in}\,\,\Omega_e  
\end{equation}
where $\Omega$ is a bounded Lipschitz domain and $\Omega_e:= \mathbb{R}^3\setminus \bar{\Omega}$.
Under appropriate assumptions on $\lambda_0, \mu_0, \lambda, \mu$, we can show its well-posedness
so we will be able to define the associated Dirichlet-to-Neumann map $\Lambda$, which is formally given by
\begin{equation}\label{DN}
\Lambda g:= (-\Delta)^sL_{\lambda_0, \mu_0}u_g|_{\Omega_e}.
\end{equation}
Our goal here is to determine both $\lambda$ and $\mu$ from exterior partial measurements of $\Lambda$. 

We remark that our problem can be viewed as a variant of the fractional Calder\'on problem first introduced in \cite{ghosh2020calderon} where the authors considered the
exterior Dirichlet problem
$$((-\Delta)^s+ q)u= 0\,\,\, \text{in}\,\,\Omega,\qquad
u= g\,\,\,\, \text{in}\,\,\Omega_e$$ and they proved the fundamental uniqueness theorem that the potential $q$ in $\Omega$ can be determined from exterior partial measurements of the map 
$$\Lambda_q: g\to (-\Delta)^su_g|_{\Omega_e}.$$

It has been shown that the knowledge of $\Lambda_q g$ is equivalent to 
the knowledge of the nonlocal Neumann derivative of $u_g$ (see \cite{ghosh2020calderon} for more details). Hence our problem can also be viewed as a nonlocal analogue of
the inverse problem for the classical Lam\'e system.

We mention that inverse problems for fractional operators have been extensively studied so far. See \cite{ruland2020fractional} for low regularity
and stability results for the fractional Calder\'on problem. See \cite{ghosh2020uniqueness} for
reconstruction and single measurement results for the fractional Calder\'on problem. See \cite{ghosh2017calderon} for inverse problems for variable
coefficients fractional elliptic operators. See \cite{bhattacharyya2021inverse, covi2020higher} for inverse problems for
fractional Schr\"odinger operators with local and non-local perturbations.
See \cite{covi2020inverse, lai2021inverse, li2020determining} for inverse problems for fractional magnetic operators. See \cite{lai2020calderon, li2021fractional, li2021inverse} for inverse problems for fractional parabolic operators.

The following theorem is our first main result in this paper.
\begin{thm}
Let $0< s< 1$. Let $\mu_0> 0$ and $\lambda_0+ \mu_0\geq 0$. 
Let $0\leq\lambda^{(j)}, \mu^{(j)}\in C^1(\bar\Omega)$ and
let $W_j\subset \Omega_e$ be nonempty and open ($j= 1,2$). Suppose 
$$\Lambda^{(1)}g|_{W_2}= \Lambda^{(2)}g|_{W_2}$$
for all $g\in C^\infty_c(W_1)$. Then 
$\lambda^{(1)}= \lambda^{(2)}, \mu^{(1)}= \mu^{(2)}$ in $\Omega$.
\end{thm}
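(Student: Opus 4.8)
The plan is to follow the now-standard strategy for fractional inverse problems: combine an Alessandrini-type integral identity with the Runge approximation property, which in turn rests on the unique continuation property (UCP) for $(-\Delta)^s$. First I would set up the weak formulation and bilinear form associated with $\mathcal{L}$. For $u$ solving the exterior problem \eqref{fracLameext} with data $g$ and $v$ a test function with data $h$, one gets a symmetric bilinear form $B_{\lambda,\mu}(u,v)$ built from the local Lamé form $\int_\Omega C_{ijkl}(\lambda,\mu) e_{ij}(u) e_{kl}(v)$ plus the nonlocal piece involving $(-\Delta)^{s/2}$ acting on the $L_{\lambda_0,\mu_0}$-part, and the pairing $\langle \Lambda g, h\rangle = B_{\lambda,\mu}(u_g, v_h)$. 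From $\Lambda^{(1)} g|_{W_2} = \Lambda^{(2)} g|_{W_2}$ for all $g \in C_c^\infty(W_1)$ one derives the integral identity
\begin{equation}\label{alessandrini}
\int_\Omega \Big( C_{ijkl}(\lambda^{(1)}-\lambda^{(2)}, \mu^{(1)}-\mu^{(2)})\, e_{ij}(u^{(1)}_g)\, e_{kl}(u^{(2)}_h)\Big)\, dx = 0
\end{equation}
for all $g \in C_c^\infty(W_1)$, $h \in C_c^\infty(W_2)$, where $u^{(1)}_g, u^{(2)}_h$ solve the respective exterior problems. (One should check that the nonlocal zeroth-order-type terms cancel because $\lambda_0,\mu_0$ are common to both operators; only the variable part survives.)

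Next I would invoke the Runge approximation property: using the UCP for $(-\Delta)^s$ together with a Hahn–Banach/duality argument applied to the well-posed forward problem, the set $\{ u_g|_\Omega : g \in C_c^\infty(W) \}$ is dense in a suitable solution space for any nonempty open $W \subset \Omega_e$. The key point I need is density not of the displacements themselves but of the \emph{strain tensors} $e(u_g)$ in an appropriate $L^2(\Omega)$-type space (or at least enough to make \eqref{alessandrini} force the integrand coefficient to vanish). Plugging such approximating sequences into \eqref{alessandrini}, I would choose $u^{(1)}$ and $u^{(2)}$ so that the product $e(u^{(1)})\otimes e(u^{(2)})$ ranges over a rich enough family of symmetric-matrix-valued functions — e.g. taking both strains to concentrate and align so that the quadratic form $C_{ijkl}(\delta\lambda, \delta\mu) e_{ij} e_{kl}$ is tested against a dense set. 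Writing $\delta\lambda = \lambda^{(1)}-\lambda^{(2)}$, $\delta\mu = \mu^{(1)}-\mu^{(2)}$, the contraction $C_{ijkl}(\delta\lambda,\delta\mu) e_{ij} e'_{kl} = \delta\lambda\, (\operatorname{tr} e)(\operatorname{tr} e') + 2\delta\mu\, (e : e')$, so by separately exploiting pairs with $\operatorname{tr} e \equiv 0$ (shear-type strains) and pairs with nonzero trace one can decouple the two parameters and conclude $\delta\lambda = \delta\mu = 0$ a.e. in $\Omega$, hence everywhere by continuity.

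The main obstacle I anticipate is the Runge approximation step in the correct function-space setting for this \emph{system} with a \emph{mixed local/nonlocal} principal part. The cleanest UCP tools apply to $(-\Delta)^s$ scalar-wise, but here $(-\Delta)^s$ is composed with the matrix operator $L_{\lambda_0,\mu_0}$, and the ``Neumann data'' $\Lambda g = (-\Delta)^s L_{\lambda_0,\mu_0} u_g|_{\Omega_e}$ lives on the exterior. I expect the resolution is to apply UCP componentwise to $w := L_{\lambda_0,\mu_0} u_g$ (or to exploit ellipticity of $L_{\lambda_0,\mu_0}$ so that vanishing of $w$ and its nonlocal Neumann data on an open exterior set forces $w \equiv 0$, then propagate back to $u_g$), and to carefully identify the energy space on which $\mathcal{L}$ is coercive — this is presumably where the hypotheses $\mu_0 > 0$, $\lambda_0 + \mu_0 \ge 0$, and $\lambda^{(j)},\mu^{(j)} \ge 0$ enter, via Korn's inequality and positivity of the local form. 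A secondary technical point is making sure the test-function regularity ($g,h \in C_c^\infty$) is enough to justify the integration-by-parts identities and that the approximation takes place in a topology strong enough to pass to the limit in \eqref{alessandrini}; I would handle this by standard density and the continuous dependence of $u_g$ on $g$.
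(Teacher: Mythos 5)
Your plan is essentially the paper's proof: the symmetric bilinear-form definition of $\Lambda$ yields exactly your Alessandrini identity (only the variable local terms survive since $\lambda_0,\mu_0$ are shared), the Runge approximation is proved by Hahn--Banach plus the UCP applied componentwise to $w=L_{\lambda_0,\mu_0}u$ followed by the classical UCP for the constant-coefficient Lam\'e operator, and the density holds in $\tilde H^{1+s}(\Omega)$, so the strains converge in $L^2$ and your main worry disappears. The only (minor) difference is the last step: instead of your shear/trace splitting, the paper tests the identity with explicit fields (one component a bump function $\psi$, another equal to $x_1$ on $\mathrm{supp}\,\psi$) to show first that $\lambda^{(1)}-\lambda^{(2)}$ and $\mu^{(1)}-\mu^{(2)}$ are constants and then, by combining three further identities, that these constants vanish; both routes work.
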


We remark that our problem provides an example which suggests that the inverse problem for the fractional operator is more manageable than its classical counterpart. 

Recall that to solve the classical inverse problem, we first reduce the Lam\'e
system to a first order system perturbation of the
Laplacian. Then we construct complex geometrical optics (CGO) solutions and apply the integral identity to obtain the uniqueness of $\lambda, \mu$. For some technical reasons, the uniqueness result is only proved provided that $\mu$ is close to a constant.
The full classical problem remains open.
See \cite{eskin2002inverse, nakamura2003global} for details. 

Here such a priori knowledge of $\mu$ is not required for solving the fractional problem.
Instead of constructing CGO solutions, we will use the unique continuation property and the Runge approximation property associated with our fractional operator to prove the strong uniqueness result. This scheme was first introduced in \cite{ghosh2020calderon} for solving the fractional Calder\'on problem. 

We further study an inverse obstacle problem associated with our fractional operator.

We consider the following obstacle problem
\begin{equation}\label{obeq}
\mathcal{L}u= 0\,\, \text{in}\,\,\Omega\setminus\bar{D},\quad
u= 0\,\, \text{in}\,\,D,\quad
u= g\,\, \text{in}\,\,\Omega_e
\end{equation}
where $D\subset \Omega$ is a nonempty open set satisfying that 
$\Omega\setminus \bar{D}$ is a bounded Lipschitz domain. 

As we did for the exterior problem (\ref{fracLameext}), we can similarly show 
the well-posedness of (\ref{obeq}) and define the Dirichlet-to-Neumann map $\Lambda_D$ by 
\begin{equation}\label{DNforD}
\Lambda_D g:= (-\Delta)^sL_{\lambda_0, \mu_0}u_g|_{\Omega_e}.
\end{equation}
Our next goal is to determine $D, \lambda, \mu$ from the knowledge of $\Lambda_D$. 

The following theorem is our second main result.

\begin{thm}
Let $0< s< 1$. Let $\mu_0> 0$ and $\lambda_0+ \mu_0\geq 0$. 
Let $D_j\subset \Omega$ be nonempty and open s.t.
$\Omega\setminus \bar{D_j}$ is a bounded Lipschitz domain, $0\leq\lambda^{(j)}, \mu^{(j)}\in C^1(\bar\Omega\setminus D_j)$ and
let $W_j\subset \Omega_e$ be nonempty and open ($j= 1,2$). Suppose 
$$\Lambda^{(1)}_{D_1} g|_{W_2}= \Lambda^{(2)}_{D_2} g|_{W_2}$$
for a nonzero $g\in C^\infty_c(W_1)$. Then $D_1= D_2=: D$.
Further assume the identity holds for all $g\in C^\infty_c(W_1)$.
Then $\lambda^{(1)}=\lambda^{(2)}$ and $\mu^{(1)}=\mu^{(2)}$ in $\Omega\setminus\bar{D}$.
\end{thm}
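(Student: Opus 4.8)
My plan is to reduce both parts of the theorem to the already-established machinery: the unique continuation property (UCP) for $\mathcal{L}$, the Runge approximation property, and the uniqueness result of Theorem 1.1. First I would set up the common framework. Let $u_g^{(j)}$ solve the obstacle problem \eqref{obeq} with obstacle $D_j$ and parameters $\lambda^{(j)},\mu^{(j)}$, for a fixed nonzero $g\in C_c^\infty(W_1)$. Since $\Lambda^{(1)}_{D_1}g=\Lambda^{(2)}_{D_2}g$ on $W_2$, the function $w:=u_g^{(1)}-u_g^{(2)}$ satisfies $w=0$ in $\Omega_e$ and $(-\Delta)^sL_{\lambda_0,\mu_0}w=0$ on $W_2$. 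Because $w$ vanishes on the open set $W_2\subset\Omega_e$ together with its nonlocal Neumann data there, the UCP (applied componentwise, or in the appropriate vector form, exactly as in the proof of Theorem 1.1) forces $w\equiv 0$ in $\mathbb{R}^3$. Hence $u:=u_g^{(1)}=u_g^{(2)}$ is a single function that solves the obstacle problem for \emph{both} data sets simultaneously.

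The key step for the first assertion ($D_1=D_2$) is the standard obstacle-comparison argument. Suppose $D_1\neq D_2$; WLOG there is a nonempty open set $\mathcal{O}\subset D_1\setminus\bar{D_2}$ (swapping indices if necessary, and using that $D_1,D_2$ are open so a genuine open piece of the symmetric difference exists). On $\mathcal{O}$ we have $u\equiv 0$ because $\mathcal{O}\subset D_1$, while on the other hand $\mathcal{O}\subset\Omega\setminus\bar{D_2}$, so $u$ solves $\mathcal{L}u=0$ there with zero Cauchy-type data inherited from $D_1$. The plan is then to invoke UCP once more — now for the equation in $\Omega\setminus\bar{D_2}$ — to propagate the vanishing of $u$ from $\mathcal{O}$ across the connected component of $\Omega\setminus\bar{D_2}$ that meets $W_1$, and conclude $u\equiv 0$ on a neighborhood of $W_1$ as well; but $u=g\not\equiv 0$ in $W_1$, a contradiction. (If $\Omega\setminus\bar{D_2}$ is not connected one must be a little careful, arguing component by component and using that $g$ has support in $W_1$; this is the point where the Lipschitz/connectedness hypotheses on $\Omega\setminus\bar{D_j}$ are used.) Therefore $D_1=D_2=:D$.

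For the second assertion, once $D_1=D_2=D$ is known and the identity $\Lambda^{(1)}_D g|_{W_2}=\Lambda^{(2)}_D g|_{W_2}$ holds for \emph{all} $g\in C_c^\infty(W_1)$, the problem becomes an inverse problem for the fractional Lam\'e operator on the fixed Lipschitz domain $\Omega\setminus\bar{D}$ with the added homogeneous Dirichlet condition $u=0$ in $D$. This is essentially the setting of Theorem 1.1 with $\Omega$ replaced by $\Omega\setminus\bar{D}$ and the exterior region replaced by $\Omega_e\cup D$; the measurement sets $W_1,W_2$ still lie in $\Omega_e$, which is an open subset of the new exterior. I would verify that the proof of Theorem 1.1 — the Runge approximation to generate a dense family of products $u^{(1)}u^{(2)}$ (or the appropriate strain pairings) in $\Omega\setminus\bar{D}$, followed by the integral identity — goes through verbatim, the only change being the domain. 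Concluding $\lambda^{(1)}=\lambda^{(2)}$, $\mu^{(1)}=\mu^{(2)}$ in $\Omega\setminus\bar{D}$.

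The main obstacle I anticipate is the UCP-propagation step inside $\Omega\setminus\bar{D_2}$ in the proof of $D_1=D_2$: one needs the unique continuation to carry the vanishing of the solution from the "extra" piece $\mathcal{O}$ of the obstacle all the way out to $W_1$, which requires either connectedness of $\Omega\setminus\bar{D_2}$ (or of the relevant component) or a more delicate argument tracking which components touch $W_1$ versus $\bar{D_1}$. Handling this cleanly — and making sure the nonlocal nature of $(-\Delta)^s$ (so that "$u=0$ on an open set" really is the correct hypothesis for UCP, not "$u=0$ plus normal derivative on a surface") — is where the care lies; everything else reduces to citing Theorem 1.1 and the Runge/UCP tools already assembled.
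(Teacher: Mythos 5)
Your overall skeleton matches the paper: use the nonlocal UCP to conclude $u^{(1)}_g=u^{(2)}_g=:u$ from the agreement of the DN maps on $W_2$, then derive a contradiction from a nonempty open piece of the symmetric difference of the obstacles, and finally reduce the second assertion to Theorem 1.1 with $\Omega$ replaced by $\Omega\setminus\bar{D}$ (this last reduction is exactly what the paper does). However, at the decisive step, where $D_1=D_2$ is to be forced, your plan has a genuine gap. You propose to ``propagate'' the vanishing of $u$ from $\mathcal{O}\subset D_1\setminus\bar{D}_2$ through the connected component of $\Omega\setminus\bar{D}_2$ out to $W_1$, and you yourself flag that this needs connectedness hypotheses (which the theorem does not assume) and a unique continuation principle for the full variable-coefficient operator $\mathcal{L}^{(2)}$ inside a domain (which the paper never establishes; Proposition \ref{fracLUCP} concerns only $(-\Delta)^sL_{\lambda_0,\mu_0}$). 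Moreover, $W_1\subset\Omega_e$ is disjoint from $\Omega\setminus\bar{D}_2$, so ``the component of $\Omega\setminus\bar{D}_2$ that meets $W_1$'' is not even the right geometric object. As written, this step would not go through.

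The missing idea is that no propagation is needed at all, because the nonlocal UCP is \emph{global}: if $u$ and $(-\Delta)^sL_{\lambda_0,\mu_0}u$ both vanish on \emph{any} nonempty open set, then $u\equiv 0$ in all of $\mathbb{R}^3$. On your set $\mathcal{O}$ (the paper takes $V:=D_2\setminus\bar{D}_1$, which is the same argument with indices swapped) one has $u\equiv 0$, and since $\mathcal{O}$ is open the local part $L_{\lambda^{(2)},\mu^{(2)}}u$ vanishes identically there; hence the equation $\mathcal{L}^{(2)}u=0$ in $\Omega\setminus\bar{D}_2\supset\mathcal{O}$ forces $(-\Delta)^sL_{\lambda_0,\mu_0}u=0$ in $\mathcal{O}$. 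Proposition \ref{fracLUCP} then gives $u\equiv 0$ in $\mathbb{R}^3$, contradicting $u=g\neq 0$ in $W_1$. This one observation (kill the local term on the open set where the solution vanishes, then apply the global nonlocal UCP) replaces your entire propagation-and-connectedness scheme and is precisely why the nonlocal obstacle problem is easier than its classical counterpart; with it, your outline becomes the paper's proof.
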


We also study an inverse problem for a nonlinear fractional Lam\'e system.

We consider the following nonlinear exterior problem
\begin{equation}\label{NfracLameext}
\mathcal{L}u+ \mathcal{N}u= 0\,\,\,\, \text{in}\,\,\Omega,\qquad
u= g\,\,\,\, \text{in}\,\,\Omega_e  
\end{equation}
where the nonlinear operator $\mathcal{N}$ is given by
\begin{equation}\label{Nsum}
(\mathcal{N}u)_i:= \sum^3_{j=1}\partial_jN_{ij}u,\quad (1\leq i\leq 3)
\end{equation}
$$N_{ij}u:= \frac{\lambda+\mathcal{B}}{2}\sum_{m,n}u^2_{m,n}\delta_{ij}+
\mathcal{C}(\sum_m u_{m,m})^2\delta_{ij}
+ \frac{\mathcal{B}}{2}\sum_{m,n}u_{m,n}u_{n,m}\delta_{ij}$$
$$+\mathcal{B}\sum_m u_{m,m}u_{j,i}
+ \frac{\mathcal{A}}{4}\sum_m u_{j,m}u_{m,i}+ (\lambda+ \mathcal{B})\sum_m u_{m,m}u_{i,j}$$
$$+(\mu+ \frac{\mathcal{A}}{4})\sum_m(u_{m,i}u_{m,j}+ u_{i,m}u_{j,m}+ u_{i,m}u_{m,j}).$$
This nonlinearity comes from the higher order expansion of the energy density as well as the nonlinear term in the strain tensor
$$e_{ij}:= \frac{1}{2}(u_{i,j}+u_{j,i}+\sum_m u_{m,i}u_{m,j}).$$
We remark that our $\mathcal{N}$ is the static version of the nonlinearity considered in \cite{de2020nonlinear, uhlmann2021inverse}, where the inverse problem for the associated nonlinear elastic wave equation was studied. See the references there for more background information on $\mathcal{N}$. 

Under certain assumptions, we can show the well-posedness of (\ref{NfracLameext}) for small $g\in C^\infty_c(\Omega_e)$
and then for such $g$ we can define the associated Dirichlet-to-Neumann map $\Lambda_N$ formally given by
\begin{equation}\label{DNforN}
\Lambda_N g:= (-\Delta)^sL_{\lambda_0, \mu_0}u_g|_{\Omega_e}.
\end{equation}
Our last goal is to determine $\lambda, \mu, \mathcal{A}, \mathcal{C}$ from the knowledge of $\Lambda_N$. 

The following theorem is our third main result.
\begin{thm}
Let $\frac{1}{2}\leq s< 1$. Let $\mu_0> 0$ and $\lambda_0+ \mu_0\geq 0$ and let $\mathcal{B}\in C^1(\bar\Omega)$. 
Let $0\leq\lambda^{(j)}, \mu^{(j)}\in C^1(\bar\Omega)$,
let $\mathcal{A}^{(j)}, \mathcal{C}^{(j)}\in C^1(\bar\Omega)$ and
let $W_j\subset \Omega_e$ be nonempty and open ($j= 1,2$). Suppose 
$$\Lambda^{(1)}_Ng|_{W_2}= \Lambda^{(2)}_Ng|_{W_2}$$
for small $g\in C^\infty_c(W_1)$. Then 
$\lambda^{(1)}= \lambda^{(2)}, \mu^{(1)}= \mu^{(2)}, \mathcal{A}^{(1)}= \mathcal{A}^{(2)}, \mathcal{C}^{(1)}= \mathcal{C}^{(2)}$ in $\Omega$.
\end{thm}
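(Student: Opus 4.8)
\emph{Strategy.} The plan is to run the higher-order linearization scheme of \cite{ghosh2020calderon}, with the static analogue of the nonlinear-elasticity computations of \cite{de2020nonlinear,uhlmann2021inverse}: one differentiates the measurement in a small parameter carried by the exterior data, so that the first linearization is a \emph{linear} fractional Lam\'e problem governed by $\mathcal{L}$ (to which Theorem 1.1 applies) while the second linearization is an inhomogeneous linear fractional Lam\'e problem whose source encodes the remaining unknowns $\mathcal{A},\mathcal{C}$. The asserted well-posedness of \eqref{NfracLameext} for small $g\in C_c^\infty(W_1)$ in fact gives that $g\mapsto u_g$ is twice continuously differentiable near $g=0$ into the natural energy space of $\mathcal{L}$, which is of $H^{1+s}$-type since the nonlocal part $(-\Delta)^sL_{\lambda_0,\mu_0}$ has order $2+2s$; the hypothesis $s\ge\tfrac{1}{2}$ is precisely the threshold at which the quadratic operator $\mathcal{N}$ of \eqref{Nsum}, a first derivative of products of first derivatives, maps this energy space continuously into its dual (in dimension three, $H^{s}\cdot H^{s}\hookrightarrow H^{2s-3/2}$, hence one more derivative lands $\mathcal{N}u$ in $H^{2s-5/2}\subseteq H^{-1-s}$ precisely when $s\ge\tfrac{1}{2}$). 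Fix $g_1,g_2\in C_c^\infty(W_1)$, put $g=\varepsilon_1g_1+\varepsilon_2g_2$, and let $u_\varepsilon^{(j)}$ solve the $j$-th problem; since by \eqref{DNforN} one has $\partial_\varepsilon^\alpha(\Lambda_N^{(j)}g)=(-\Delta)^sL_{\lambda_0,\mu_0}(\partial_\varepsilon^\alpha u_\varepsilon^{(j)})|_{\Omega_e}$, differentiating $\Lambda_N^{(1)}g|_{W_2}=\Lambda_N^{(2)}g|_{W_2}$ in $\varepsilon$ at $\varepsilon=0$ is legitimate.

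\emph{First linearization.} The first derivatives $v_k^{(j)}:=\partial_{\varepsilon_k}\big|_{\varepsilon=0}u_\varepsilon^{(j)}$ satisfy $\mathcal{L}v_k^{(j)}=0$ in $\Omega$ and $v_k^{(j)}=g_k$ in $\Omega_e$ (the quadratic $\mathcal{N}$ drops out at first order), and the first $\varepsilon$-derivative of the hypothesis reads $\Lambda^{(1)}g_k|_{W_2}=\Lambda^{(2)}g_k|_{W_2}$ for all $g_k\in C_c^\infty(W_1)$, where $\Lambda^{(j)}$ is the DN map \eqref{DN} of the linear operator $L_{\lambda^{(j)},\mu^{(j)}}+(-\Delta)^sL_{\lambda_0,\mu_0}$. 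By Theorem 1.1, $\lambda^{(1)}=\lambda^{(2)}=:\lambda$ and $\mu^{(1)}=\mu^{(2)}=:\mu$ in $\Omega$; hence the two linear operators coincide, $v_k^{(1)}=v_k^{(2)}=:v_k$, and — as $\mathcal{B}$ is prescribed — the only coefficients through which $\mathcal{N}^{(1)}$ and $\mathcal{N}^{(2)}$ can still differ are $\mathcal{A}$ and $\mathcal{C}$.

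\emph{Second linearization and recovery of $\mathcal{A},\mathcal{C}$.} The mixed second derivatives $w^{(j)}:=\partial_{\varepsilon_1}\partial_{\varepsilon_2}\big|_{\varepsilon=0}u_\varepsilon^{(j)}$ satisfy $\mathcal{L}w^{(j)}+\mathcal{M}^{(j)}(v_1,v_2)=0$ in $\Omega$ and $w^{(j)}=0$ in $\Omega_e$, where $\mathcal{M}^{(j)}(v_1,v_2)$ is the symmetric bilinear form obtained by polarizing the quadratic $\mathcal{N}^{(j)}$ and depends on $j$ only through $\mathcal{A}^{(j)},\mathcal{C}^{(j)}$. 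Since $\Lambda_N$ is linear in $u_g$, the second $\varepsilon$-derivative of the hypothesis together with $w^{(1)}=w^{(2)}=0$ in $\Omega_e$ shows that $w:=w^{(1)}-w^{(2)}$ has vanishing exterior Dirichlet data and vanishing nonlocal Neumann data on $W_2$, while $\mathcal{L}w=-(\mathcal{M}^{(1)}-\mathcal{M}^{(2)})(v_1,v_2)$ in $\Omega$. Pairing $w$ — via the weak formulation of $\mathcal{L}$ and the formal self-adjointness of $L_{\lambda,\mu}$ and of $(-\Delta)^sL_{\lambda_0,\mu_0}$ — with a solution $v_0$ of $\mathcal{L}v_0=0$ in $\Omega$ having exterior data in $C_c^\infty(W_2)$, all boundary and nonlocal contributions cancel (since $w\equiv0$ in $\Omega_e$ and the nonlocal Neumann data of $w$ vanishes on $W_2\supseteq\operatorname{supp}(v_0|_{\Omega_e})$), exactly as in the Green-type identity behind \eqref{DN} and the fractional Calder\'on problem. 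After an integration by parts, whose boundary terms are neutralized by taking the approximating solutions supported in the interior of $\Omega$, this leaves
\[
\int_\Omega\Big[(\mathcal{A}^{(1)}-\mathcal{A}^{(2)})\,P_{\mathcal{A}}(\nabla v_0,\nabla v_1,\nabla v_2)+(\mathcal{C}^{(1)}-\mathcal{C}^{(2)})\,P_{\mathcal{C}}(\nabla v_0,\nabla v_1,\nabla v_2)\Big]\,dx=0
\]
for all such $v_0$ and all $v_1,v_2$ with exterior data in $C_c^\infty(W_1)$, where $P_{\mathcal{A}},P_{\mathcal{C}}$ are universal trilinear forms read off from \eqref{Nsum}; in particular, since $\mathcal{C}$ enters \eqref{Nsum} only through $\mathcal{C}(\sum_m u_{m,m})^2\delta_{ij}$, one has $P_{\mathcal{C}}=c\,(\operatorname{div}v_0)(\operatorname{div}v_1)(\operatorname{div}v_2)$ with $c\neq0$. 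By the Runge approximation property of $\mathcal{L}$ together with interior regularity, $v_0,v_1,v_2$ may be taken to approximate arbitrary smooth fields supported near any chosen $x_0\in\Omega$; a standard concentration argument then extracts the coefficients pointwise at $x_0$: choosing the local profiles so that $\operatorname{div}v_0(x_0)=0$ while $P_{\mathcal{A}}\neq0$ gives $\mathcal{A}^{(1)}(x_0)=\mathcal{A}^{(2)}(x_0)$, and choosing them so that $P_{\mathcal{C}}\neq0$ gives $\mathcal{C}^{(1)}(x_0)=\mathcal{C}^{(2)}(x_0)$. As $x_0$ is arbitrary, $\mathcal{A}^{(1)}=\mathcal{A}^{(2)}$ and $\mathcal{C}^{(1)}=\mathcal{C}^{(2)}$ in $\Omega$, which with the first linearization proves the theorem.

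\emph{Main difficulty.} I expect the crux to be this last separation: one must check that $P_{\mathcal{A}}$, coming from the terms $\tfrac{\mathcal{A}}{4}\sum_m u_{j,m}u_{m,i}$ and $(\mu+\tfrac{\mathcal{A}}{4})\sum_m(u_{m,i}u_{m,j}+u_{i,m}u_{j,m}+u_{i,m}u_{m,j})$ in \eqref{Nsum}, does not vanish identically on $\{\operatorname{div}v_0(x_0)=0\}$ — equivalently, that the symmetries built into $\mathcal{N}$ do not force $\mathcal{A}$ and $\mathcal{C}$ to enter only through a single fixed linear combination — and that the Runge/concentration argument really produces the required gradient configurations at an interior point. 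A secondary technical point, underpinning the first two steps, is the $C^2$-dependence of $u_g$ on $g$ in the fractional energy scale and the precise Green identity for $\mathcal{L}$; this is where $s\ge\tfrac{1}{2}$ enters.
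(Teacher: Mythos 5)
Your overall scheme is the paper's: first-order linearization in the exterior data reduces to the linear DN maps so that Theorem 1.1 identifies $\lambda,\mu$; a second-order (mixed $\epsilon_1\epsilon_2$) linearization produces $w^{(j)}$ solving $\mathcal{L}w^{(j)}+\tilde N^{(j)}(v^{(1)},v^{(2)})=0$ with $w^{(j)}=0$ in $\Omega_e$, and Runge approximation lets one replace $v^{(1)},v^{(2)}$ by arbitrary $C^\infty_c(\Omega)$ fields in the resulting identity. One harmless deviation: where you pair $w=w^{(1)}-w^{(2)}$ against a third solution $v_0$ driven from $W_2$ (an Alessandrini-type identity, then Runge in the $v_0$-slot), the paper simply applies the unique continuation property of $(-\Delta)^sL_{\lambda_0,\mu_0}$ to $w$ (zero exterior data, equal Neumann data on $W_2$) to get $w^{(1)}=w^{(2)}$ in $\mathbb{R}^3$, hence $\sum_j\partial_j\tilde G_{ij}(v^{(1)},v^{(2)})=0$ in $\Omega$, and then tests against arbitrary $\psi\in C^\infty_c(\Omega)$. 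These two routes deliver the same trilinear identity.

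The genuine gap is exactly the step you defer as ``the main difficulty'': passing from
$\int_\Omega\sum_{i,j}\tilde G_{ij}(f^{(1)},f^{(2)})\,\partial_j\psi_i=0$
to $\mathcal{A}^{(1)}=\mathcal{A}^{(2)}$ and $\mathcal{C}^{(1)}=\mathcal{C}^{(2)}$, and your proposed shortcut (concentrate $v_0,v_1,v_2$ near $x_0$ and read off pointwise values) does not work in the form sketched. Because the identity has divergence structure, if you make $\nabla f^{(1)},\nabla f^{(2)}$ (locally) constant near $x_0$ then the weight multiplying the coefficient difference is a constant contraction of $\partial_j f^{(0)}_i$, and every such weight integrates to zero over the support of $f^{(0)}$; so it can never be a single-signed bump, and such choices only yield information on $\nabla(\mathcal{A}^{(2)}-\mathcal{A}^{(1)})$ and $\nabla(\mathcal{C}^{(2)}-\mathcal{C}^{(1)})$, i.e.\ constancy, not pointwise values. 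The paper's proof therefore has two stages you are missing: (i) explicit choices such as $f^{(1)}=(0,\phi,0)$, $f^{(2)}=(\phi,0,0)$ and $f^{(1)}=(0,\varphi,0)$, $f^{(2)}=(\phi,0,0)$ with $\phi=x_1$, $\varphi=x_2$ on $\mathrm{supp}\,\psi$, which compute $\tilde G_{ij}$ exactly and show the differences $c_\mathcal{A},c_\mathcal{C}$ are constants; and (ii) a separate argument with genuinely non-affine profiles, $f^{(1)}=(\psi,0,0)$, $f^{(2)}=(\phi,0,0)$, producing the three identities
$(4c_{\mathcal{A}}+4c_{\mathcal{C}})\|\partial_k\psi\|^2_{L^2}+c_{\mathcal{A}}\|\partial_{k+1}\psi\|^2_{L^2}+c_{\mathcal{A}}\|\partial_{k+2}\psi\|^2_{L^2}=0$,
which combine (with $\|\partial_1\psi\|_{L^2}\neq\|\partial_2\psi\|_{L^2}$) to give $6c_\mathcal{A}+4c_\mathcal{C}=0$ and then $c_\mathcal{A}=c_\mathcal{C}=0$. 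Without carrying out some version of this algebra — in particular verifying that the symmetries of $\tilde N$ do not lock $\mathcal{A}$ and $\mathcal{C}$ into a single combination — the uniqueness of $\mathcal{A},\mathcal{C}$ is not established, so your argument is incomplete precisely at the point where the theorem's content lies.
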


Note that here we only claim that $\mathcal{A}, \mathcal{C}$ can be determined for a fixed $\mathcal{B}$. The question whether we can simultaneously determine $\mathcal{A},\mathcal{B},\mathcal{C}$ is still open.

The rest of this paper is organized in the following way. In Section 2, we summarize the background knowledge. In Section 3, we show the well-posedness of the linear exterior problem; We prove the unique continuation property and the Runge approximation property associated with our fractional operator; Then we prove Theorem 1.1 and Theorem 1.2. In Section 4, we show the well-posedness of the nonlinear exterior problem for small exterior data; We combine linearization arguments with the Runge approximation property to prove Theorem 1.3.
\medskip

\noindent \textbf{Acknowledgements.} The author would like to thank Professor Gunther Uhlmann for suggesting the problem and for helpful discussions.

\section{Preliminaries}

Throughout this paper we use the following notations.
\begin{itemize}
\item We fix the space dimension $n=3$ and the fractional power $0< s< 1$.

\item $x= (x_1, x_2, x_3)$ denotes the spatial variable.

\item For vector-valued function $u$, $u_i$ denotes the $i$-th component of $u$ and $u_{i,j}:= \partial_j u_i$. 

\item $\Omega$ denotes a bounded Lipschitz domain and
$\Omega_e:= \mathbb{R}^n\setminus\bar{\Omega}$.

\item $\delta_{ij}$ denotes the standard Kronecker delta.

\item $\langle\cdot, \cdot\rangle$ denotes the distributional pairing so formally, $\langle f, g\rangle= \int fg$.
\end{itemize}
Throughout this paper we refer all function spaces to real-valued function spaces. For convenience, we use the same notation for the scalar-valued function space and the vector-valued one. For instance, $C^\infty_c(\Omega)$
can be either $C^\infty_c(\Omega; \mathbb{R})$ or $C^\infty_c(\Omega; \mathbb{R}^3)$.

\subsection{Sobolev spaces}
For $r\in \mathbb{R}$, we have the Sobolev space
$$H^r(\mathbb{R}^n):= \{f\in \mathcal{S}'(\mathbb{R}^n):
\int_{\mathbb{R}^n} (1+\vert \xi\vert^2)^r\vert \mathcal{F}f(\xi)\vert^2d\xi<\infty\}$$
where $\mathcal{F}$ is the Fourier transform and $\mathcal{S}'(\mathbb{R}^n)$ is the space of temperate distributions.

We have the natural identification
$$H^{-r}(\mathbb{R}^n)= H^r(\mathbb{R}^n)^*.$$
Let $U$ be an open set in $\mathbb{R}^n$. Let $F$ be a closed set in $\mathbb{R}^n$. Then
$$H^r(U):= \{u|_U: u\in H^r(\mathbb{R}^n)\},\qquad 
H^r_F(\mathbb{R}^n):= 
\{u\in H^r(\mathbb{R}^n): \mathrm{supp}\,u\subset F\},$$
$$\tilde{H}^r(U):= 
\mathrm{the\,\,closure\,\,of}\,\, C^\infty_c(U)\,\,\mathrm{in}\,\, H^r(\mathbb{R}^n).$$
Since $\Omega$ is a bounded Lipschitz domain, we also have the identifications
$$\tilde{H}^r(\Omega)= H^r_{\bar{\Omega}}(\mathbb{R}^n),\quad 
H^{-r}(\Omega)= \tilde{H}^r(\Omega)^*.$$

Let $0< s< 1$. It is well known that we have the following continuous embedding
$$\tilde{H}^s(\Omega)\hookrightarrow L^{\frac{2n}{n-2s}}(\Omega).$$
See for instance, Section 1.5 in \cite{bisci2016variational}. It has also been proved that we have 
the continuous embedding
$$L^{\frac{n}{2s}}(\Omega)\hookrightarrow M(H^s\to H^{-s})$$
where $M(H^s\to H^{-s})$ is the space of pointwise multipliers from $H^s(\mathbb{R}^n)$ to $H^{-s}(\mathbb{R}^n)$ equipped with the norm
$$||f||_{s, -s}= \sup\{|\langle f, \phi\psi\rangle|:\,
\phi,\psi\in C^\infty_c(\Omega),\,\,||\phi||_{H^s(\mathbb{R}^n)}
=||\psi||_{H^s(\mathbb{R}^n)} = 1\}.$$
See for instance, Section 2 in \cite{covi2020higher, ruland2020fractional}.

\subsection{Fractional Laplacian}
Let $0< s< 1$. The fractional Laplacian $(-\Delta)^s$ is formally given 
by the pointwise definition 
$$(-\Delta)^su(x):= 
c_{n,s}\lim_{\epsilon\to 0^+}
\int_{\mathbb{R}^n\setminus B_\epsilon(x)}\frac{u(x)-u(y)}{|x-y|^{n+2s}}\,dy$$
as well as the the equivalent Fourier transform definition
$$(-\Delta)^su(x):= \mathcal{F}^{-1}(|\xi|^{2s}\mathcal{F}u(\xi))(x).$$

It is well-known that one of the equivalent forms of the $H^{s}$-norm 
is given by 
$$||f||^2_{H^{s}}:= ||f||^2_{L^2}+ \int_{\mathbb{R}^n}\int_{\mathbb{R}^n}
\frac{|f(x)- f(y)|^2}{|x-y|^{n+ 2s}},\quad f\in H^{s}(\mathbb{R}^n)$$
and we have the following bilinear form formula
$$\langle (-\Delta)^su, v\rangle=  c'_{n,s}\int_{\mathbb{R}^n}\int_{\mathbb{R}^n}\frac
{(u(x)-u(y))(v(x)-v(y))}{|x-y|^{n+2s}}\,dxdy,\quad 
u, v\in H^{s}(\mathbb{R}^n).$$
It is also well-known that one of the equivalent forms of the $H^{1+s}$-norm is given by 
$$||f||^2_{H^{1+s}}:= ||f||^2_{H^1}+ \sum_j\int_{\mathbb{R}^n}\int_{\mathbb{R}^n}
\frac{|\partial_j f(x)- \partial_j f(y)|^2}{|x-y|^{n+ 2s}},\quad f\in H^{1+s}(\mathbb{R}^n).$$
By the classical and fractional Poincar\'e inequalities, we have the following norm equivalence
$$||f||^2_{H^{1+s}}\sim \sum_j(||\partial_j f||^2_{L^2}+ 
\langle (-\Delta)^s \partial_j f, \partial_j f\rangle)
\sim \sum_j\langle (-\Delta)^s \partial_j f, \partial_j f\rangle),\quad
f\in\tilde{H}^{1+s}(\Omega).$$
 
The following unique continuation property of $(-\Delta)^s$ was first proved in \cite{ghosh2020calderon}.
\begin{prop}\label{UCP}
Suppose $u\in H^r(\mathbb{R}^n)$ for some $r\in \mathbb{R}$. Let $W\subset \mathbb{R}^n$ be open and non-empty. If $$(-\Delta)^su= u= 0\quad\mathrm{in}\,\,W,$$
then $u= 0$ in $\mathbb{R}^n$.
\end{prop}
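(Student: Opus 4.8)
The plan is to transfer this nonlocal statement to a \emph{local} one via the Caffarelli--Silvestre extension and then to apply a unique continuation principle for a degenerate elliptic equation from boundary Cauchy data.

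First I would attach to $u\in H^r(\mathbb{R}^n)$ its Caffarelli--Silvestre extension $\tilde u(x,y)$ on the half-space $\mathbb{R}^{n+1}_+=\mathbb{R}^n\times(0,\infty)$, that is, the (weak) solution of
$$\mathrm{div}\,(y^{1-2s}\nabla\tilde u)=0\ \text{ in }\ \mathbb{R}^{n+1}_+,\qquad \tilde u(\cdot,0)=u,$$
in the appropriate weighted energy space; since $y^{1-2s}$ is a Muckenhoupt $A_2$ weight for $0<s<1$, this problem is well posed and $\tilde u$ enjoys the standard interior and up-to-the-boundary regularity. The key feature is the Dirichlet-to-Neumann identity
$$-c_{n,s}\lim_{y\to 0^+}y^{1-2s}\partial_y\tilde u(x,y)=(-\Delta)^su(x),\qquad c_{n,s}\neq 0,$$
understood distributionally. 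Hence the hypothesis $u=0$ and $(-\Delta)^su=0$ in $W$ translates exactly into: $\tilde u$ has both vanishing Dirichlet data and vanishing weighted conormal derivative on the open boundary patch $W\times\{0\}$.

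The heart of the argument is then a unique continuation statement for $\mathrm{div}\,(y^{1-2s}\nabla\,\cdot\,)$: a solution whose full Cauchy data vanish on the open set $W\times\{0\}$ must vanish in a neighborhood of $W\times\{0\}$, and therefore, by a chain-of-balls argument using the interior weak unique continuation property for this $A_2$-weighted equation together with the connectedness of $\mathbb{R}^{n+1}_+$, identically on $\mathbb{R}^{n+1}_+$. Restricting to $y=0$ then gives $u\equiv 0$ on $\mathbb{R}^n$. The boundary step follows from a Carleman estimate for $\mathrm{div}\,(y^{1-2s}\nabla\,\cdot\,)$ with a Carleman weight adapted to the conformal structure of the extension and chosen so as to control the boundary terms generated by integration by parts against $y^{1-2s}$.

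I expect that Carleman estimate near the degenerate set $\{y=0\}$ to be the main obstacle: because $y^{1-2s}$ degenerates or blows up there, the classical Aronszajn--H\"ormander unique continuation theory does not apply directly, and both the selection of the Carleman weight and the bookkeeping of the boundary contributions take genuine effort. By contrast, the construction of the extension, its regularity theory, and the propagation of the zero set are routine once that estimate is available.
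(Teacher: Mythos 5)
The paper does not prove this proposition at all: it is quoted verbatim from the fractional Calder\'on paper of Ghosh--Salo--Uhlmann (\cite{ghosh2020calderon}), and your plan --- Caffarelli--Silvestre extension, vanishing Cauchy data on $W\times\{0\}$, unique continuation for the $A_2$-degenerate operator $\mathrm{div}(y^{1-2s}\nabla\cdot)$, propagation by connectedness, restriction to $y=0$ --- is exactly the strategy used there. So in spirit you have reproduced the proof of the cited reference rather than found a different route.

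Two points in your sketch are, however, more than bookkeeping and are left unaddressed. First, the statement is for $u\in H^r(\mathbb{R}^n)$ with \emph{arbitrary} $r\in\mathbb{R}$, in particular negative; for such $u$ there is no weighted energy formulation of the extension problem and no ``standard up-to-the-boundary regularity'' to invoke. One has to define $\tilde u$ by convolving the tempered distribution $u$ with the Poisson kernel of the extension and verify the identity $-c_{n,s}\lim_{y\to0^+}y^{1-2s}\partial_y\tilde u=(-\Delta)^su$ in a distributional sense adapted to $H^r$ (this low-regularity bookkeeping occupies a nontrivial part of the argument in \cite{ghosh2020calderon}); alternatively one must first reduce to smoother $u$. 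Second, the step from vanishing Dirichlet and weighted Neumann data on $W\times\{0\}$ to vanishing of $\tilde u$ near that patch is normally not done with a new boundary Carleman estimate: since both traces vanish, the even reflection of $\tilde u$ across $\{y=0\}$ is a weak solution of the same degenerate equation in a full open neighborhood of $W\times\{0\}$ in $\mathbb{R}^{n+1}$, and then the \emph{interior} weak unique continuation property for $A_2$-weighted equations (available via the Carleman estimates of R\"uland or Almgren-type frequency monotonicity \`a la Garofalo--Lin) applies directly. As written, your proposal defers precisely these two delicate ingredients (the distributional extension for negative $r$ and the degenerate-boundary UCP), so it is a correct outline of the known proof rather than a self-contained argument; filling them in along the lines above, or simply citing \cite{ghosh2020calderon} as the paper does, closes the gap.
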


\section{Linear fractional elasticity}

\subsection{Well-posedness}
We first study the equation 
\begin{equation}\label{LameOmega}
\mathcal{L}u= f\,\,\,\,\text{in}\,\,\Omega.
\end{equation}
The bilinear form associated with $-\mathcal{L}$ (see (\ref{fracLame}) in Section 1 for its definition) is
$$B[u, v]= \langle \sum_j (-\Delta)^s\lambda_0 u_{j, j}, \sum_j v_{j,j}\rangle
+ \int_{\Omega}\lambda (\sum_j u_{j,j})(\sum_j v_{j,j})$$
\begin{equation}\label{Blf}
+ \frac{1}{2}\sum_{i,j}(\langle (-\Delta)^s\mu_0 (u_{i, j}+ u_{j, i}), 
v_{i, j}+ v_{j, i}\rangle+ \int_\Omega\mu(u_{i, j}+ u_{j, i})(v_{i, j}+ v_{j, i})).
\end{equation}

Assume $\lambda, \mu\in L^\infty(\Omega)$. Then it is clear that $B$ is bounded over $H^{1+s}(\mathbb{R}^3)\times H^{1+s}(\mathbb{R}^3)$ since
the bilinear form $\langle (-\Delta)^s\phi, \psi\rangle$ is bounded over $H^{s}(\mathbb{R}^3)\times H^{s}(\mathbb{R}^3)$.
Now we show that $B$ is coercive over 
$\tilde{H}^{1+s}(\Omega)\times \tilde{H}^{1+s}(\Omega)$
if we further assume
$\lambda, \mu\geq 0$, $\mu_0> 0$ and $\lambda_0+ \mu_0\geq 0$.

In fact, we note that
$$\langle (-\Delta)^s u_{i, j}, u_{j, i}\rangle=
\langle (-\Delta)^s u_{i, i}, u_{j, j}\rangle$$
for $u\in \tilde{H}^{1+s}(\Omega)$ so we have
$$\langle \sum_j (-\Delta)^s\lambda_0 u_{j, j}, \sum_j u_{j,j}\rangle
+ \frac{1}{2}\sum_{i,j}\langle (-\Delta)^s\mu_0 (u_{i, j}+ u_{j, i}), 
u_{i, j}+ u_{j, i}\rangle$$
$$= \langle \sum_j (-\Delta)^s\lambda_0 u_{j, j}, \sum_j u_{j,j}\rangle+ \sum_{i,j}\langle (-\Delta)^s\mu_0 u_{i, j}, u_{i, j}\rangle
+ \sum_{i,j}\langle (-\Delta)^s\mu_0 u_{i, j}, u_{j, i}\rangle$$
$$= \langle \sum_j (-\Delta)^s\lambda_0 u_{j, j}, \sum_j u_{j,j}\rangle+ \sum_{i,j}\langle (-\Delta)^s\mu_0 u_{i, j}, u_{i, j}\rangle
+ \sum_{i,j}\langle (-\Delta)^s\mu_0 u_{i, i}, u_{j, j}\rangle$$
$$= \langle \sum_j (-\Delta)^s(\lambda_0+ \mu_0)u_{j, j}, \sum_j u_{j,j}\rangle+ \sum_{i,j}\langle (-\Delta)^s\mu_0 u_{i, j}, u_{i, j}\rangle$$
$$\geq \sum_{i,j}\langle (-\Delta)^s\mu_0 u_{i, j}, u_{i, j}\rangle.$$
Then the coerciveness immediately follows from the $H^{1+s}$-norm equivalence
(see Subsection 2.2).

Now the Lax-Milgram theorem implies that the solution operator $f\to u_f$ associated with (\ref{LameOmega}) is well-defined, which is a homeomorphism
from $H^{-1-s}(\Omega)$ to $\tilde{H}^{s+1}(\Omega)$.

From now on we will always assume $0\leq\lambda, \mu\in L^\infty(\Omega)$, $\mu_0> 0$ and $\lambda_0+ \mu_0\geq 0$. 

Let $f:= -(-\Delta)^sL_{\lambda_0, \mu_0}g|_{\Omega}$ in (\ref{LameOmega}). Then we have the well-posedness of the exterior problem (\ref{fracLameext}).
\begin{prop}
For each $g\in H^{1+s}(\mathbb{R}^3)$, there exists a unique solution $u_g\in H^{1+s}(\mathbb{R}^3)$ of (\ref{fracLameext}) s.t. $u_g- g\in \tilde{H}^{1+s}(\Omega)$. Moreover, the solution operator $g\to u_g$
is bounded on $H^{1+s}(\mathbb{R}^3)$.
\end{prop}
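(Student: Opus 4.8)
The plan is to reduce the exterior problem (\ref{fracLameext}) to the interior problem (\ref{LameOmega}), whose well-posedness has just been established via Lax--Milgram. Given exterior data $g\in H^{1+s}(\mathbb{R}^3)$, I would look for the solution in the form $u_g= g+ w$ with $w\in\tilde H^{1+s}(\Omega)$; then the equation $\mathcal{L}u_g= 0$ in $\Omega$ together with $u_g- g\in\tilde H^{1+s}(\Omega)$ is equivalent to $\mathcal{L}w= -\mathcal{L}g= f$ in $\Omega$ with $w\in\tilde H^{1+s}(\Omega)$, where $f:= -(-\Delta)^sL_{\lambda_0,\mu_0}g|_\Omega$ as indicated right before the proposition (note the lower-order term $L_{\lambda,\mu}g$ is supported away from $\Omega$ for $g$ with exterior support, but in general $g\in H^{1+s}(\mathbb{R}^3)$ one keeps the full $\mathcal{L}g|_\Omega$; here the relevant case is $g\in C^\infty_c(\Omega_e)$, so $L_{\lambda,\mu}g|_\Omega= 0$ and only the nonlocal term contributes).

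The key steps, in order, are: (1) check that $f= -\mathcal{L}g|_\Omega\in H^{-1-s}(\Omega)$, i.e. that $\mathcal{L}$ maps $H^{1+s}(\mathbb{R}^3)\to H^{-1-s}(\mathbb{R}^3)$ boundedly — this follows from boundedness of $(-\Delta)^s:H^s\to H^{-s}$ composed with the first-order operators $L_{\lambda_0,\mu_0}$, $L_{\lambda,\mu}$ (using $\lambda,\mu\in L^\infty$), and then restricting; (2) apply the interior well-posedness (the Lax--Milgram consequence stated above) to get a unique $w= u_f\in\tilde H^{1+s}(\Omega)$ solving $\mathcal{L}w= f$, with $\|w\|_{\tilde H^{1+s}(\Omega)}\lesssim\|f\|_{H^{-1-s}(\Omega)}\lesssim\|g\|_{H^{1+s}(\mathbb{R}^3)}$; (3) set $u_g:= g+ w$ and read off the conclusions: $u_g\in H^{1+s}(\mathbb{R}^3)$, $u_g= g$ in $\Omega_e$ (since $w$ is supported in $\bar\Omega$), $\mathcal{L}u_g= 0$ in $\Omega$, and $\|u_g\|_{H^{1+s}}\le\|g\|_{H^{1+s}}+\|w\|_{H^{1+s}}\lesssim\|g\|_{H^{1+s}}$; (4) uniqueness: if $u_g, \tilde u_g$ are two solutions with $u_g- g,\tilde u_g- g\in\tilde H^{1+s}(\Omega)$, then $v:= u_g-\tilde u_g\in\tilde H^{1+s}(\Omega)$ solves $\mathcal{L}v= 0$ in $\Omega$, so $B[v,v]= 0$, and coerciveness forces $v= 0$.

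I expect no serious obstacle: the proposition is essentially a translation of the homogeneous-boundary interior solvability into the exterior formulation, and the only points requiring care are the mapping property $\mathcal{L}:H^{1+s}\to H^{-1-s}$ (a routine consequence of the mapping properties recalled in Subsection 2.2) and the bookkeeping that $\langle f,\phi\rangle= -B[g,\phi]$ for $\phi\in C^\infty_c(\Omega)$, so that the weak formulation $B[w,\phi]= \langle f,\phi\rangle$ for $w$ is exactly the weak formulation $B[u_g,\phi]= 0$ for $u_g= g+w$. If anything is mildly delicate it is making sure the distributional identity $f= -(-\Delta)^sL_{\lambda_0,\mu_0}g|_\Omega$ from (\ref{DN}) is consistent with the bilinear form $B$ — i.e. that the term $\int_\Omega\lambda(\sum_j g_{j,j})(\sum_j\phi_{j,j})$ and the corresponding $\mu$-term, coming from the local part $L_{\lambda,\mu}g$, are accounted for; for the intended class $g\in C^\infty_c(\Omega_e)$ these vanish, and for general $g\in H^{1+s}(\mathbb{R}^3)$ one simply absorbs them into $f$, which remains in $H^{-1-s}(\Omega)$.
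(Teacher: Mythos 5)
Your proposal is correct and follows essentially the same route as the paper: the paper likewise obtains the exterior well-posedness by writing $u_g= g+ w$ and solving the interior problem (\ref{LameOmega}) with source $f$ determined by $g$ via the Lax--Milgram result, with uniqueness from coercivity. Your remark that for general $g\in H^{1+s}(\mathbb{R}^3)$ the local part $L_{\lambda,\mu}g|_\Omega$ must also be absorbed into $f$ is a point the paper glosses over (it writes only the nonlocal term), so your bookkeeping is, if anything, slightly more careful.
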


\subsection{Dirichlet-to-Neumann map and integral identity}
Let $X:= H^{1+s}(\mathbb{R}^3)/\tilde{H}^{1+s}(\Omega)
= H^{1+s}(\Omega_e)$ 
and $\tilde{g}:=$ the natural image of 
$g\in H^{1+s}(\mathbb{R}^3)$ in $X$.

We define the Dirichlet-to-Neumann map $\Lambda$ by
$$\langle\Lambda \tilde{g}, \tilde{h}\rangle:= -B[u_g, h],\quad 
g,h\in H^{1+s}(\mathbb{R}^3)$$
where $u_g$ is the solution corresponding to the exterior data $g$ in (\ref{fracLameext}).

It is easy to verify that $\Lambda: X\to X^*$ is well-defined and this bilinear form definition coincides with the one given by (\ref{DN}) 
for $g\in C^\infty_c(\Omega_e)$.

For convenience, we will write 
$\Lambda g$ and $\langle \Lambda g, h\rangle$
instead of $\Lambda\tilde{g}$ and $\langle \Lambda\tilde{g}, \tilde{h}\rangle$.
Note that 
$$\langle\Lambda g, h\rangle= -B[u_g, u_h]= -B[u_h, u_g]
=\langle\Lambda h, g\rangle$$
so we have the integral identity
$$\langle\Lambda^{(1)} g^{(1)}, g^{(2)}\rangle- \langle\Lambda^{(2)} g^{(1)}, g^{(2)}\rangle
= -B^{(1)}[u^{(1)}, u^{(2)}]+ B^{(2)}[u^{(2)}, u^{(1)}]$$
\begin{equation}\label{intid}
=\int_{\Omega}(\lambda^{(2)}-\lambda^{(1)})(\sum_j u^{(1)}_{j,j})(\sum_j u^{(2)}_{j,j}) +\frac{1}{2}\int_\Omega(\mu^{(2)}-\mu^{(1)})\sum_{k,j}(u^{(1)}_{k, j}+ u^{(1)}_{j, k})
(u^{(2)}_{k, j}+ u^{(2)}_{j, k})
\end{equation}
where $\mathcal{L}^{(j)}$,$B^{(j)}$,$\lambda^{(j)}$ correspond to Lam\'e parameters $\lambda^{(j)}, \mu^{(j)}$; $u^{(j)}$ denotes the solution of
$$\mathcal{L}^{(j)}u= 0\,\,\,\, \text{in}\,\,\Omega,\qquad
u= g^{(j)}\,\,\,\, \text{in}\,\,\Omega_e.$$
We remark that this integral identity has the same form as its classical counterpart (see \cite{eskin2002inverse}).

\subsection{Unique continuation property and Runge approximation property}
Recall that a classical operator $L$ possesses the unique continuation property in a domain $U$ if 
$$\mathcal{L}u= 0\,\,\,\,\text{in}\,\,U,\quad
u= 0\,\,\,\,\text{in}\,\,V$$
where $V$ is a nonempty open subset of $U$ imply that $u= 0$ in $U$.

It is well-known that the classical constant coefficients Lam\'e operator $L_{\lambda_0, \mu_0}$ possesses the unique continuation property in this sense. (This property even holds true for the general variable coefficients Lam\'e operator $L_{\lambda, \mu}$. See for instance, the main theorem in \cite{ding1998unique}.)

The following proposition is the unique continuation property of $(-\Delta)^sL_{\lambda_0, \mu_0}$.
\begin{prop}\label{fracLUCP}
Let $u\in H^{1+s}(\mathbb{R}^3)$. Let $W$ be open. If $$(-\Delta)^sL_{\lambda_0, \mu_0}u= u= 0\quad\text{in}\,\,W,$$
then $u= 0$ in $\mathbb{R}^3$.
\end{prop}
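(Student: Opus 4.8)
The plan is to reduce the unique continuation property (UCP) of the fractional Lamé operator to the scalar UCP for $(-\Delta)^s$ (Proposition \ref{UCP}) by exploiting the algebraic structure of $L_{\lambda_0,\mu_0}$ with \emph{constant} coefficients. First I would rewrite the constant-coefficient Lamé operator in its standard form
$$L_{\lambda_0,\mu_0}u = \mu_0\,\Delta u + (\lambda_0+\mu_0)\,\nabla(\nabla\cdot u),$$
so that the fractional operator becomes $(-\Delta)^s L_{\lambda_0,\mu_0}u = \mu_0(-\Delta)^s\Delta u + (\lambda_0+\mu_0)(-\Delta)^s\nabla(\nabla\cdot u)$. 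Since the constant-coefficient operators $(-\Delta)^s$, $\Delta$, $\nabla$, $\nabla\cdot$ all commute as Fourier multipliers, I can manipulate this expression freely.

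The key step is to pass to scalar quantities on which Proposition \ref{UCP} applies. Taking the divergence of $(-\Delta)^sL_{\lambda_0,\mu_0}u=0$ in $W$ gives, using $\Delta(\nabla\cdot u)=\nabla\cdot(\Delta u)$ and $\nabla\cdot\nabla=\Delta$,
$$\mu_0(-\Delta)^s\Delta(\nabla\cdot u) + (\lambda_0+\mu_0)(-\Delta)^s\Delta(\nabla\cdot u) = (\lambda_0+2\mu_0)(-\Delta)^s\Delta(\nabla\cdot u)=0 \quad\text{in }W.$$
Since $\mu_0>0$ and $\lambda_0+\mu_0\ge 0$ force $\lambda_0+2\mu_0>0$, the scalar function $w:=\nabla\cdot u$ satisfies $(-\Delta)^s\Delta w=0$ in $W$; and because $u=0$ in the open set $W$ we also have $w=0$ and $\Delta w=0$ in $W$. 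Now $(-\Delta)^{s}(\Delta w) = (-\Delta)^{s+1}w$ is again a fractional Laplacian (of order $s+1>0$) up to sign, and applying Proposition \ref{UCP} to $\Delta w$ — which vanishes together with $(-\Delta)^s(\Delta w)$ on $W$ — yields $\Delta w = 0$ in all of $\mathbb{R}^3$; since $w\in H^{s}(\mathbb{R}^3)\subset \mathcal{S}'$ and $\Delta w=0$ globally with $w$ decaying, $w\equiv 0$, i.e.\ $\nabla\cdot u\equiv 0$. (One must be slightly careful about the regularity exponent so that Proposition \ref{UCP} applies; since $u\in H^{1+s}$, $w=\nabla\cdot u\in H^{s}$ and $\Delta w\in H^{s-2}$, which is fine as the proposition allows $r\in\mathbb{R}$.) Feeding $\nabla\cdot u\equiv 0$ back into the equation kills the $(\lambda_0+\mu_0)$ term, leaving $\mu_0(-\Delta)^s\Delta u=0$ in $W$ with $\mu_0>0$, hence $(-\Delta)^{s+1}u=0$ in $W$ componentwise, with $u=0$ in $W$; a final application of Proposition \ref{UCP} to each component $\Delta u_i$ (vanishing with $(-\Delta)^s\Delta u_i$ on $W$) gives $\Delta u=0$ in $\mathbb{R}^3$, and then $u\equiv 0$ by the same Liouville-type argument as before.

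The main obstacle I anticipate is bookkeeping the Sobolev regularity and making the two applications of Proposition \ref{UCP} rigorous: one needs the intermediate quantities ($\Delta w$, $\Delta u_i$) to lie in some $H^r(\mathbb{R}^3)$ and to genuinely vanish — both themselves and after applying $(-\Delta)^s$ — on the \emph{open} set $W$, which requires knowing $u=0$ on $W$ in a strong enough sense (true since $u\in H^{1+s}$ and $W$ is open, so all derivatives of $u$ up to order relevant vanish there as distributions on $W$). A secondary point is justifying the Liouville step: a tempered distribution $v$ with $\Delta v=0$ on $\mathbb{R}^3$ is a harmonic polynomial, and membership in a negative-order Sobolev space forces $v\equiv 0$. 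These are standard but should be stated carefully. An alternative, perhaps cleaner, route is to decompose $u$ via the Helmholtz/Hodge decomposition $u = \nabla\phi + \psi$ with $\nabla\cdot\psi=0$ and run the scalar UCP on $\phi$ and on the components of $\psi$ directly; I would mention this but carry out the divergence-based argument above since it avoids constructing the decomposition explicitly.
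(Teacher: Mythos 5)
Your proof is correct, but it takes a genuinely different route from the paper's. The paper's argument is two lines: since $u=0$ in $W$ and $L_{\lambda_0,\mu_0}$ is a local operator, the vector field $v:=L_{\lambda_0,\mu_0}u\in H^{s-1}(\mathbb{R}^3)$ satisfies $(-\Delta)^s v=v=0$ in $W$, so Proposition \ref{UCP} gives $L_{\lambda_0,\mu_0}u\equiv 0$ in $\mathbb{R}^3$, and the conclusion then follows from the classical unique continuation property of the (constant-coefficient) Lam\'e operator, which the paper invokes as a known result (citing \cite{ding1998unique}). You never pass through $L_{\lambda_0,\mu_0}u\equiv 0$: instead you take the divergence to decouple the system, apply Proposition \ref{UCP} first to $\Delta(\nabla\cdot u)$ (using $\lambda_0+2\mu_0\geq\mu_0>0$), conclude $\nabla\cdot u\equiv 0$ by a Liouville step, feed this back in, and apply Proposition \ref{UCP} again to each $\Delta u_i$ (using $\mu_0>0$), finishing with the same Liouville observation that a globally harmonic tempered distribution lying in an $H^r$ space (here $w\in H^s$ and $u_i\in H^{1+s}$, both contained in $L^2$) must vanish. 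The trade-off: your argument is longer but more self-contained, relying only on the scalar fractional UCP plus elementary Fourier/Liouville facts, and in effect it reproves the constant-coefficient Lam\'e UCP in the global form needed here; the paper's proof is shorter but leans on an external elasticity UCP result. The technical points you flag all check out: $\nabla\cdot$ is local so the divergence of the equation still vanishes on $W$, Proposition \ref{UCP} allows arbitrary $r\in\mathbb{R}$ so $\Delta(\nabla\cdot u)\in H^{s-2}$ and $\Delta u_i\in H^{s-1}$ are admissible, vanishing of $u$ on the open set $W$ kills all its derivatives there, and no nonzero polynomial lies in any $H^r(\mathbb{R}^3)$; finally, the standing assumptions $\mu_0>0$, $\lambda_0+\mu_0\geq 0$ indeed give $\lambda_0+2\mu_0>0$ as you use.
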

\begin{proof}
By the unique continuation property of $(-\Delta)^s$ (Proposition \ref{UCP}), we have $L_{\lambda_0, \mu_0}\phi= 0$ in $\mathbb{R}^3$. Then by the unique continuation property of $L_{\lambda_0, \mu_0}$ we have $u= 0$ in $\mathbb{R}^3$.
\end{proof}

Based on the unique continuation property above, we can prove the following 
Runge approximation property.
\begin{prop}\label{RAP}
Let $W\subset \Omega_e$ be nonempty and open. Then
$$S:= \{u_g|_\Omega: g\in C^\infty_c(W)\}$$
is dense in $\tilde{H}^{1+s}(\Omega)$ where $u_g$ is the solution corresponding to the exterior data $g$ in (\ref{fracLameext}).
\end{prop}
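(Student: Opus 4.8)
The plan is to argue by duality, following the scheme of \cite{ghosh2020calderon}, and reduce density of $S$ to the unique continuation property of Proposition~\ref{fracLUCP}. First I record that $S\subset\tilde H^{1+s}(\Omega)$: for $g\in C^\infty_c(W)$ with $W\subset\Omega_e$ we have $g|_\Omega=0$, so $u_g|_\Omega=(u_g-g)|_\Omega$ and $v_g:=u_g-g\in\tilde H^{1+s}(\Omega)$ by the well-posedness of (\ref{fracLameext}). By the Hahn--Banach theorem together with the identification $(\tilde H^{1+s}(\Omega))^*=H^{-1-s}(\Omega)$, it suffices to show that every $F\in H^{-1-s}(\Omega)$ with $\langle F,v_g\rangle=0$ for all $g\in C^\infty_c(W)$ must vanish. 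Fix such an $F$. Since $B$ is bounded and coercive on $\tilde H^{1+s}(\Omega)$ (Subsection~3.1), the Lax--Milgram theorem yields a unique $\phi\in\tilde H^{1+s}(\Omega)$ with $B[\phi,w]=\langle F,w\rangle$ for all $w\in\tilde H^{1+s}(\Omega)$; by symmetry of $B$ this $\phi$ plays the role of the adjoint state.

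The core of the proof is to convert the condition $\langle F,v_g\rangle=0$ into a nonlocal Neumann trace of $\phi$ on $W$. Taking $w=v_g$ and using the symmetry of $B$, $\langle F,v_g\rangle=B[\phi,v_g]=B[v_g,\phi]=B[u_g,\phi]-B[g,\phi]$. The weak formulation of (\ref{fracLameext}) gives $B[u_g,\phi]=0$, since $\phi\in\tilde H^{1+s}(\Omega)$ is an admissible test function. Because $g|_\Omega=0$ annihilates all the $\int_\Omega(\cdot)$ terms in (\ref{Blf}), while the remaining nonlocal terms can be rearranged by integrating by parts twice in the local operator $L_{\lambda_0,\mu_0}$ and invoking the self-adjointness of $(-\Delta)^s$, one gets $B[g,\phi]=-\langle(-\Delta)^sL_{\lambda_0,\mu_0}g,\phi\rangle$. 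Hence $\langle F,v_g\rangle=\langle(-\Delta)^sL_{\lambda_0,\mu_0}g,\phi\rangle$, and since $(-\Delta)^s$ and the constant-coefficient operator $L_{\lambda_0,\mu_0}$ are formally self-adjoint and commute, moving them off of $g$ gives
$$\langle F,v_g\rangle=\langle(-\Delta)^sL_{\lambda_0,\mu_0}\phi,\,g\rangle=\langle(-\Delta)^sL_{\lambda_0,\mu_0}\phi|_{\Omega_e},\,g\rangle\qquad(g\in C^\infty_c(W)).$$

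Since the left-hand side vanishes for every $g\in C^\infty_c(W)$, we conclude $(-\Delta)^sL_{\lambda_0,\mu_0}\phi=0$ in $W$; moreover $\phi=0$ in $\Omega_e\supset W$ because $\phi\in\tilde H^{1+s}(\Omega)$. As $\phi\in H^{1+s}(\mathbb R^3)$, Proposition~\ref{fracLUCP} forces $\phi\equiv0$ on $\mathbb R^3$, and then $\langle F,w\rangle=B[\phi,w]=0$ for every $w\in\tilde H^{1+s}(\Omega)$, i.e.\ $F=0$; by Hahn--Banach, $S$ is dense in $\tilde H^{1+s}(\Omega)$. I expect the only genuine bookkeeping to lie in the middle step — carefully justifying $B[u_g,\phi]=0$, the identity $B[g,\phi]=-\langle(-\Delta)^sL_{\lambda_0,\mu_0}g,\phi\rangle$, and the transfer of $(-\Delta)^sL_{\lambda_0,\mu_0}$ from $g$ onto $\phi$ — all of which amount to unwinding the definitions on $\mathbb R^3$ and using that $\partial\Omega$ is Lebesgue-null; once that translation is in place, Proposition~\ref{fracLUCP} (the unique continuation property proved above) supplies the essential input, just as in the scalar fractional Calder\'on problem.
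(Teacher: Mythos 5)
Your proposal is correct and follows essentially the same route as the paper: Hahn--Banach duality, a Lax--Milgram adjoint state $\phi$ solving $B[\phi,\cdot]=\langle F,\cdot\rangle$, reduction of the orthogonality condition to $(-\Delta)^sL_{\lambda_0,\mu_0}\phi=0$ in $W$ together with $\phi=0$ in $\Omega_e$, and then Proposition~\ref{fracLUCP}. The only difference is cosmetic: the paper writes the dual state as the solution of $\mathcal{L}v=f$ in $\Omega$ and reads off $B[v,g]=0$ directly, whereas you make explicit the transfer of $(-\Delta)^sL_{\lambda_0,\mu_0}$ onto $\phi$; the substance is identical.
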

\begin{proof}
By the Hahn-Banach Theorem, it suffices to show that:

If $f\in H^{-1-s}(\Omega)$ and $\langle f, w\rangle= 0$ for all $w\in S$, then $f= 0$. 

In fact, we can
choose $v\in \tilde{H}^{1+s}(\Omega)$ to be the solution of $\mathcal{L}v= f$ in $\Omega$. Then for any $g\in C^\infty_c(W)$, by the assumption $\langle f, w\rangle= 0$ for $w\in S$ we have
$$0= \langle f, u_g- g\rangle= -B[v, u_g- g]= B[v, g]$$
since $B[u_g, v]= 0$.
This implies $(-\Delta)^sL_{\lambda_0, \mu_0}v= 0$ in $W$. By the unique continuation property above we have $v= 0$ in $\mathbb{R}^3$. Hence $f= 0$.
\end{proof}

\subsection{Proof of Theorem 1.1}
Now we are ready to prove Theorem 1.1. The key point is to approximate certain carefully chosen functions by solutions of the linear exterior problems based on the Runge approximation property. This will enable us to exploit the integral identity (\ref{intid}) to determine $\lambda, \mu$.

\begin{proof}
Let $u^{(j)}$ denote the solution of
$$\mathcal{L}^{(j)}u= 0\,\,\,\, \text{in}\,\,\Omega,\qquad
u= g^{(j)}\,\,\,\, \text{in}\,\,\Omega_e.$$
For any given $\epsilon> 0$ and $f^{(j)}\in C^\infty_c(\Omega)$, by the Runge approximation property (Proposition \ref{RAP}), we can choose
$g^{(1)}\in C^\infty_c(W_1)$ s.t.
$$||u^{(1)}- f^{(1)}||_{\tilde{H}^{1+s}(\Omega)}\leq \epsilon$$
and for this chosen $g^{(1)}$, we can choose $g^{(2)}\in C^\infty_c(W_2)$ s.t.
$$||u^{(1)}||_{\tilde{H}^{1+s}(\Omega)}||u^{(2)}- f^{(2)}||_{\tilde{H}^{1+s}(\Omega)}\leq \epsilon.$$
(Actually we only need the $H^1$-norm approximation.) By the assumption 
$$\Lambda^{(1)}g|_{W_2}= \Lambda^{(2)}g|_{W_2}$$
for $g\in C^\infty_c(W_1)$ and the integral identity (\ref{intid}) we get
$$\int_{\Omega}(\lambda^{(2)}-\lambda^{(1)})(\sum_j u^{(1)}_{j,j})(\sum_j u^{(2)}_{j,j}) +\frac{1}{2}\int_\Omega(\mu^{(2)}-\mu^{(1)})\sum_{k,j}(u^{(1)}_{k, j}+ u^{(1)}_{j, k})(u^{(2)}_{k, j}+ u^{(2)}_{j, k})= 0.$$
Based on our choice for $g^{(j)}$, we get
$$|\int_{\Omega}(\lambda^{(2)}-\lambda^{(1)})(\sum_j f^{(1)}_{j,j})(\sum_j f^{(2)}_{j,j}) +\frac{1}{2}\int_\Omega(\mu^{(2)}-\mu^{(1)})\sum_{k,j}(f^{(1)}_{k, j}+ f^{(1)}_{j, k})
(f^{(2)}_{k, j}+ f^{(2)}_{j, k})|\leq C\epsilon$$
where $C$ is a constant depending on $\lambda^{(j)},\mu^{(j)}$ and $f^{(j)}$. Hence we conclude that
\begin{equation}\label{zerointid}
\int_{\Omega}(\lambda^{(2)}-\lambda^{(1)})(\sum_j f^{(1)}_{j,j})(\sum_j f^{(2)}_{j,j}) +\frac{1}{2}\int_\Omega(\mu^{(2)}-\mu^{(1)})\sum_{k,j}(f^{(1)}_{k, j}+ f^{(1)}_{j, k})
(f^{(2)}_{k, j}+ f^{(2)}_{j, k})= 0
\end{equation}
since $\epsilon$ is arbitrary.

We will appropriately choose $f^{(j)}$ in (\ref{zerointid}) to determine $\lambda, \mu$.

In fact, for any given $\psi\in C^\infty_c(\Omega)$,  we can choose $\phi$ s.t. $\phi\in C^\infty_c(\Omega)$ and 
$\phi= x_1$ on $\mathrm{supp}\, \psi$.

We can show that
$$\int_\Omega(\mu^{(2)}-\mu^{(1)})\partial_j \psi= 0,\quad (1\leq j\leq 3).$$
For instance, to obtain the equality above for $j=2$, we can choose 
$$f^{(1)}= (0, \phi, 0),\qquad f^{(2)}= (\psi, 0, 0)$$ 
in (\ref{zerointid}).
We can also show that
$$\int_\Omega(\lambda^{(2)}-\lambda^{(1)})\partial_j \psi= 0,
\quad (1\leq j\leq 3).$$
For instance, to obtain the equality above for $j=2$, we can choose 
$$f^{(1)}= (\phi,0,0),\qquad f^{(2)}= (0, \psi, 0)$$
in (\ref{zerointid}).
Hence we get 
$$\nabla(\mu^{(2)}-\mu^{(1)})= \nabla(\lambda^{(2)}-\lambda^{(1)})= 0.$$

Now we show that the constants $c_\mu:= \mu^{(2)}-\mu^{(1)}$ and 
$c_\lambda:= \lambda^{(2)}-\lambda^{(1)}$ are zeros. 

In fact, we can choose $\psi\in C^\infty_c(\Omega)$ s.t. $||\partial_1 \psi||_{L^2}\neq ||\partial_2 \psi||_{L^2}$.
Then we can show that
$$(c_\lambda+ 2c_\mu)||\partial_1 \psi||^2_{L^2}
+ c_\mu||\partial_2 \psi||^2_{L^2}+ c_\mu||\partial_3 \psi||^2_{L^2}= 0,$$
$$(c_\lambda+ 2c_\mu)||\partial_2 \psi||^2_{L^2}
+ c_\mu||\partial_3 \psi||^2_{L^2}+ c_\mu||\partial_1 \psi||^2_{L^2}= 0,$$
$$(c_\lambda+ 2c_\mu)||\partial_3 \psi||^2_{L^2}
+ c_\mu||\partial_1 \psi||^2_{L^2}+ c_\mu||\partial_2 \psi||^2_{L^2}= 0.$$
For instance, to obtain the first identity, we can choose 
$$f^{(1)}= f^{(2)}= (\psi, 0, 0)$$
in (\ref{zerointid}).
Now we combine the three identities to obtain $c_\lambda+ 4c_\mu= 0$. Then 
we combine the first two identities to obtain $c_\mu||\partial_1 \psi||^2_{L^2}= c_\mu||\partial_2 \psi||^2_{L^2}$, which implies 
$c_\mu= 0$ and thus $c_\lambda= 0$.
\end{proof}

\begin{remark}
Similar strategy has been applied to solve inverse problems for fractional Schr\"odinger operators with local perturbations. See \cite{cekic2020calderon, covi2020higher} for details.
\end{remark}

\subsection{Proof of Theorem 1.2}
We can identically apply the considerations for the exterior problem (\ref{fracLameext}) in previous subsections to the obstacle problem
(\ref{obeq}). (We just replace $\Omega$ by $\Omega\setminus\bar{D}$ in earlier arguments.) Our main task in this subsection is to prove the first part of Theorem 1.2. We will see that this part is an immediate consequence of the 
unique continuation property (Proposition \ref{fracLUCP}). Once we have determined the obstacle $D$, we can use the same arguments as in the proof of Theorem 1.1 to determine $\lambda, \mu$.

\begin{proof}
For the fixed nonzero $g\in C^\infty_c(W_1)$, let
$u^{(j)}$ denote the solution of the obstacle problem 
$$\mathcal{L}^{(j)}u= 0\,\, \text{in}\,\,\Omega\setminus\bar{D_j},\quad
u= 0\,\, \text{in}\,\,D_j,\quad
u= g\,\, \text{in}\,\,\Omega_e.$$
Since we have the assumption
$$\Lambda^{(1)}_{D_1} g|_{W_2}= \Lambda^{(2)}_{D_2} g|_{W_2}$$
and $u^{(1)}= u^{(2)}= g$ in $\Omega_e$, we get 
$$(-\Delta)^sL_{\lambda_0, \mu_0}(u^{(1)}- u^{(2)})= u^{(1)}- u^{(2)}= 0\quad\text{in}\,\,W_2.$$
Then Proposition \ref{fracLUCP} implies that $u^{(1)}= u^{(2)}$ in $\mathbb{R}^3$.

Suppose $D_1\neq D_2$. Without loss of generality
we can assume $V:= D_2\setminus\bar{D}_1$ is nonempty. Note that
$u^{(1)}= u^{(2)}= 0$ in $V$. Then the equation for $u^{(1)}$ in $V$ implies $(-\Delta)^sL_{\lambda_0, \mu_0}u^{(1)}= 0$ in $V$. But now Proposition \ref{fracLUCP} implies that
$u^{(1)}= 0$ in $\mathbb{R}^3$, which contradicts that $g$ is nonzero.
\end{proof}

\begin{remark}
Similar inverse obstacle problems have been studied for fractional elliptic operators. See \cite{cao2017simultaneously} for details.
\end{remark}

\section{Nonlinear fractional elasticity}
\subsection{Well-posedness and Dirichlet-to-Neumann map}
We first study the nonlinear equation
\begin{equation}\label{LNf}
\mathcal{L}u+ \mathcal{N}u= f\,\, \text{in}\,\,\Omega
\end{equation}
See (\ref{Nsum}) in Section 1 for the definition of $\mathcal{N}$.

From now on we will always assume $\mathcal{A}, \mathcal{B}, \mathcal{C}\in C^1(\bar\Omega)$ in (\ref{Nsum}). 
Let $\frac{1}{2}\leq s< 1$. Then $\frac{2n}{n-2s}\geq \frac{n}{2s}$ for $n= 3$ so we have the continuous embeddings (see Subsection 2.1)
$$\tilde{H}^s(\Omega)\hookrightarrow L^{\frac{2n}{n-2s}}(\Omega)
\hookrightarrow L^{\frac{n}{2s}}(\Omega)\hookrightarrow M(H^s\to H^{-s}).$$

Note that for $u\in \tilde{H}^{s+1}(\Omega)$, each component of $Nu$ is a sum of terms which have the form 
$\partial_j(a\phi\psi)$ where $a\in C^1(\bar\Omega)$ and $\phi,\psi\in \tilde{H}^{s}(\Omega)$ so $Nu\in H^{-1-s}(\mathbb{R}^3)$. 
Hence the map $F$ defined by
$$F(f, u):= (\mathcal{L}u+ \mathcal{N}u)|_\Omega- f$$
maps from $H^{-1-s}(\Omega)\times \tilde{H}^{s+1}(\Omega)$ to
$H^{-1-s}(\Omega)$.

Note that $F(0, 0)= 0$ and it is easy to verify that the Fr\'echet derivative
$$D_uF|_{(0,0)}(v)= \mathcal{L}v|_{\Omega},$$
which is a homeomorphism from $\tilde{H}^{s+1}(\Omega)$ to $H^{-1-s}(\Omega)$. By Implicit function theorem (see for instance, Theorem 10.6 in \cite{renardy2006introduction}), there exists $\delta> 0$ s.t. whenever
$||f||_{H^{-1-s}(\Omega)}\leq \delta$, we have both existence and uniqueness of small solutions of (\ref{LNf}), and $u_f$ smoothly depends on $f$.

Let $f:= -(-\Delta)^sL_{\lambda_0, \mu_0}g|_{\Omega}$ in (\ref{LNf}). Then we have the well-posedness of (\ref{NfracLameext}).
\begin{prop}
For each sufficiently small $g\in C^\infty_c(\Omega_e)$, there exists a unique small solution $u_g$ of the exterior problem (\ref{NfracLameext}) s.t. $u_g- g\in \tilde{H}^{1+s}(\Omega)$ and $u_g$ smoothly depends on $g$.
\end{prop}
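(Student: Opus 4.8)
The plan is to reduce the exterior problem (\ref{NfracLameext}) to the interior problem (\ref{LNf}), which the implicit function theorem argument just above already solves, exactly mirroring the passage from (\ref{LameOmega}) to Proposition 3.2 in the linear case. First I would record that the linear map $g\mapsto f:=-(-\Delta)^sL_{\lambda_0,\mu_0}g|_\Omega$ is bounded from $H^{1+s}(\mathbb{R}^3)$ to $H^{-1-s}(\Omega)$: indeed $L_{\lambda_0,\mu_0}$ is a constant-coefficient second-order operator, so $L_{\lambda_0,\mu_0}g\in H^{s-1}(\mathbb{R}^3)$, and $(-\Delta)^s$ maps $H^{s-1}(\mathbb{R}^3)$ to $H^{-1-s}(\mathbb{R}^3)$, after which restriction to $\Omega$ is bounded. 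Hence there is $\delta'>0$ such that $\|g\|_{H^{1+s}(\mathbb{R}^3)}\le\delta'$ forces $\|f\|_{H^{-1-s}(\Omega)}\le\delta$, with $\delta$ the radius furnished above. For such $g$, let $v:=u_f\in\tilde H^{1+s}(\Omega)$ be the unique small solution of $\mathcal Lv+\mathcal Nv=f$ in $\Omega$, and set $u_g:=v+g\in H^{1+s}(\mathbb{R}^3)$; then $u_g-g=v\in\tilde H^{1+s}(\Omega)$ and $u_g=g$ in $\Omega_e$ are immediate.

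Next I would check that $u_g$ solves the equation in $\Omega$. Since $g$ vanishes on the open set $\Omega$, the locality of $L_{\lambda,\mu}$ and of $\mathcal N$ gives $L_{\lambda,\mu}(v+g)|_\Omega=L_{\lambda,\mu}v|_\Omega$ and $\mathcal N(v+g)|_\Omega=\mathcal Nv|_\Omega$, while only the nonlocal term contributes an extra piece, $(-\Delta)^sL_{\lambda_0,\mu_0}(v+g)|_\Omega=(-\Delta)^sL_{\lambda_0,\mu_0}v|_\Omega-f$. Summing yields $(\mathcal Lu_g+\mathcal Nu_g)|_\Omega=(\mathcal Lv+\mathcal Nv)|_\Omega-f=0$, which is (\ref{NfracLameext}). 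For uniqueness, if $\tilde u_g$ is another small solution with $\tilde u_g-g\in\tilde H^{1+s}(\Omega)$, the same computation run backwards shows $\tilde v:=\tilde u_g-g$ solves $\mathcal L\tilde v+\mathcal N\tilde v=f$ in $\Omega$ and is small in $\tilde H^{1+s}(\Omega)$, so $\tilde v=v$ by the uniqueness clause of the implicit function theorem, hence $\tilde u_g=u_g$.

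Finally, smooth dependence on $g$ follows by composing the bounded linear (hence smooth) map $g\mapsto f$ with the smooth map $f\mapsto u_f$ provided by the implicit function theorem, and then adding the identity $g\mapsto g$; all three are smooth between the relevant Banach spaces, so $g\mapsto u_g$ is smooth. I do not expect a genuine obstacle: the only points needing care are the bookkeeping of which terms are local and which is nonlocal when splitting $u_g=v+g$, so that the source $f$ in (\ref{LNf}) is precisely cancelled, and verifying that the smallness of $g$ is measured in a norm ($H^{1+s}(\mathbb{R}^3)$) controlling $\|f\|_{H^{-1-s}(\Omega)}$, which is what triggers the theorem. The restriction $s\ge\tfrac12$ enters only through the multiplier embedding of Subsection 2.1, which was already used to guarantee $\mathcal Nu\in H^{-1-s}(\mathbb{R}^3)$ and the smoothness of $F$.
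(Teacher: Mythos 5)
Your proposal is correct and follows essentially the same route as the paper: the paper simply substitutes $f:=-(-\Delta)^sL_{\lambda_0,\mu_0}g|_{\Omega}$ into the interior problem (\ref{LNf}) and invokes the implicit function theorem result established just before, which is exactly your reduction $u_g=v+g$ with $v=u_f$. You merely spell out the details the paper leaves implicit (boundedness of $g\mapsto f$, locality of $L_{\lambda,\mu}$ and $\mathcal{N}$, uniqueness, and smoothness by composition), all of which are accurate.
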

Now we can conclude that the associated Dirichlet-to-Neumann map $\Lambda_N$ given by (\ref{DNforN}) is well-defined at least for small $g\in C^\infty_c(\Omega_e)$.

\subsection{Proof of Theorem 1.3}
We are ready to prove Theorem 1.3. We will first apply the first order linearization and the linear result (Theorem 1.1) to determine $\lambda, \mu$.
Then we will apply the second order linearization and the Runge approximation property (Proposition \ref{RAP}) to determine $\mathcal{A}, \mathcal{C}$.
\begin{proof}
\textbf{Determine $\lambda, \mu$}: Let $u^{(j)}_{\epsilon, g}$ be the solution of the exterior problem
\begin{equation}\label{LjNg}
\mathcal{L}^{(j)}u+ \mathcal{N}^{(j)}u= 0\,\,\,\,\text{in}\,\,\Omega,\qquad
u= \epsilon g\,\,\,\,\text{in}\,\,\Omega_e
\end{equation}
for $g\in C^\infty_c(W_1)$ and small $\epsilon$. 
Applying $\frac{\partial}{\partial\epsilon}|_{\epsilon= 0}$ to (\ref{LjNg}), we obtain that 
$$u^{(j)}_g:=\frac{\partial}{\partial\epsilon}|_{\epsilon= 0}u^{(j)}_{\epsilon, g}$$
is the solution of 
$$\mathcal{L}^{(j)}u= 0\,\,\,\,\text{in}\,\,\Omega,\qquad
u= g\,\,\,\,\text{in}\,\,\Omega_e.$$
Since we have the assumption
$$\Lambda^{(1)}_Ng= \Lambda^{(2)}_Ng\qquad \text{in}\,\,W_2,$$
$$i.e.\quad (-\Delta)^sL_{\lambda_0, \mu_0}u^{(1)}_{\epsilon, g}
= (-\Delta)^sL_{\lambda_0, \mu_0}u^{(2)}_{\epsilon, g}
\qquad \text{in}\,\,W_2,$$
we can apply $\frac{\partial}{\partial\epsilon}|_{\epsilon= 0}$ to the identity to obtain that
$$(-\Delta)^sL_{\lambda_0, \mu_0}u^{(1)}_g
= (-\Delta)^sL_{\lambda_0, \mu_0}u^{(2)}_g\qquad \text{in}\,\,W_2,$$
$$i.e.\quad \Lambda^{(1)}g= \Lambda^{(2)}g\qquad \text{in}\,\,W_2.$$
Hence we conclude that
$\lambda^{(1)}= \lambda^{(2)}:= \lambda$, $\mu^{(1)}= \mu^{(2)}:= \mu$
based on Theorem 1.1. 

Now we use $\mathcal{L}$ to denote both $\mathcal{L}^{(1)}$ and  $\mathcal{L}^{(2)}$.

\textbf{Determine $\mathcal{A}, \mathcal{C}$}:
Let $u^{(j)}_{\epsilon, g}$ be the solution of the exterior problem
\begin{equation}\label{LNjg}
\mathcal{L}u+ \mathcal{N}^{(j)}u= 0\,\,\,\,\text{in}\,\,\Omega,\quad
u= \epsilon_1g^{(1)}+ \epsilon_2g^{(2)}\,\,\,\,\text{in}\,\,\Omega_e
\end{equation}
for $g^{(j)}\in C^\infty_c(W_1)$ and small $\epsilon_j$.
First note that
$$\frac{\partial}{\partial\epsilon_j}|_{\epsilon_j= 0}u^{(1)}_{\epsilon, g}=\frac{\partial}{\partial\epsilon_j}|_{\epsilon_j= 0}u^{(2)}_{\epsilon, g}$$
since both of them are the solution of
$$\mathcal{L}u= 0\,\,\,\ \text{in}\,\,\Omega,\qquad
u= g^{(j)}\,\,\,\, \text{in}\,\,\Omega_e.$$
Hence we can denote both of them by $v^{(j)}$. 
Next we apply 
$\frac{\partial^2}{\partial\epsilon_1\partial\epsilon_2}|_{\epsilon_1= \epsilon_2= 0}$ to (\ref{LNjg}). Then $$w^{(j)}:=\frac{\partial^2}{\partial\epsilon_1\partial\epsilon_2}|_{\epsilon_1= \epsilon_2= 0}u^{(j)}_{\epsilon, g}$$
satisfies
\begin{equation}\label{Lw12}
\mathcal{L}w^{(j)}+ \tilde{N}^{(j)}(v^{(1)}, v^{(2)})= 0\,\,\,\, \text{in}\,\,\Omega,\qquad w^{(j)}= 0\,\,\,\, \text{in}\,\,\Omega_e.
\end{equation}
Here (based on (\ref{Nsum})) we can compute that
$$\tilde{N}^{(l)}_i= \sum^3_{j=1}\partial_j\tilde{N}^{(l)}_{ij},\quad 
(1\leq l\leq 2;\, 1\leq i\leq 3)$$
$$\tilde{N}^{(l)}_{ij}(v^{(1)}, v^{(2)})= (\lambda+ \mathcal{B})
\sum_{m,n}v^{(1)}_{m,n}v^{(2)}_{m,n}\delta_{ij}$$
$$+2\mathcal{C}^{(l)}(\sum_mv^{(1)}_{m,m})(\sum_mv^{(2)}_{m,m})\delta_{ij}
+ \mathcal{B}\sum_{m,n}v^{(1)}_{m,n}v^{(2)}_{m,n}\delta_{ij}
+ \mathcal{B}\sum_m(v^{(1)}_{m,m}v^{(2)}_{j,i}+ v^{(2)}_{m,m}v^{(1)}_{j,i})$$
$$+ \frac{\mathcal{A}^{(l)}}{4}\sum_m(v^{(1)}_{j,m}v^{(2)}_{m,i}+ v^{(2)}_{j,m}v^{(1)}_{m,i})+
(\lambda+ \mathcal{B})\sum_m(v^{(1)}_{m,m}v^{(2)}_{i,j}+ v^{(2)}_{m,m}v^{(1)}_{i,j})$$
$$+ (\mu+ \frac{\mathcal{A}^{(l)}}{4})
\sum_m(v^{(1)}_{m,i}v^{(2)}_{m,j}+ v^{(2)}_{m,i}v^{(1)}_{m,j}+
v^{(1)}_{i,m}v^{(2)}_{j,m}+ v^{(2)}_{i,m}v^{(1)}_{j,m}+
v^{(1)}_{i,m}v^{(2)}_{m,j}+ v^{(2)}_{i,m}v^{(1)}_{m,j}).$$

We also apply $\frac{\partial^2}{\partial\epsilon_1\partial\epsilon_2}|_{\epsilon_1= \epsilon_2= 0}$ to the Dirichlet-to-Neumann map assumption
$$(-\Delta)^sL_{\lambda_0, \mu_0}u^{(1)}_{\epsilon, g}
= (-\Delta)^sL_{\lambda_0, \mu_0}u^{(2)}_{\epsilon, g}
\qquad \text{in}\,\,W_2.$$
Then we get 
$$(-\Delta)^sL_{\lambda_0, \mu_0}w^{(1)}
= (-\Delta)^sL_{\lambda_0, \mu_0}w^{(2)}\qquad \text{in}\,\,W_2.$$
Since $w^{(1)}= w^{(2)}= 0$
in $\Omega_e$, the unique continuation property (Proposition \ref{fracLUCP}) implies that $w^{(1)}= w^{(2)}$ in $\mathbb{R}^3$.

Now we combine the two equations ($j= 1,2$) in (\ref{Lw12}) to obtain
\begin{equation}\label{Gij}
\sum^3_{j=1}\partial_j\tilde{G}_{ij}(v^{(1)},v^{(2)})
= 0,\quad (1\leq i\leq 3)
\end{equation}
where 
$$\tilde{G}_{ij}(v^{(1)},v^{(2)})= \tilde{N}^{(2)}_{ij}(v^{(1)}, v^{(2)})
-\tilde{N}^{(1)}_{ij}(v^{(1)}, v^{(2)})$$
$$=\frac{\mathcal{A}^{(2)}-\mathcal{A}^{(1)}}{4}(\sum_m (v^{(1)}_{j,m}+ v^{(1)}_{m,j})(v^{(2)}_{i,m}+ v^{(2)}_{m,i})+ \sum_m (v^{(2)}_{j,m}+ v^{(2)}_{m,j})(v^{(1)}_{i,m}+ v^{(1)}_{m,i}))$$
$$+2(\mathcal{C}^{(2)}-\mathcal{C}^{(1)})(\sum_mv^{(1)}_{m,m})
(\sum_mv^{(2)}_{m,m})\delta_{ij}.
$$
Let both sides of (\ref{Gij}) act on $\psi\in C^\infty_c(\Omega)$.
Then Proposition \ref{RAP} implies that
$$\sum^3_{j=1}\int\tilde{G}_{ij}(f^{(1)}, f^{(2)})\partial_j\psi= 0,\quad (1\leq i\leq 3)$$
for any $f^{(j)}\in C^\infty_c(\Omega)$.
We can appropriately choose $f^{(j)}$ to show that
$$\int(\mathcal{A}^{(2)}-\mathcal{A}^{(1)})\partial_j\psi= 0,
\quad (1\leq j\leq 3).$$
For instance, if we choose $i=1$,
$f^{(1)}= (0, \phi, 0)$ and $f^{(2)}= (\phi, 0, 0)$ where $\phi\in C^\infty_c(\Omega)$ satisfies $\phi= x_1$ on $\mathrm{supp}\,\psi$ for a chosen $\psi\in C^\infty_c(\Omega)$,  a direct computation shows that
$$\tilde{G}_{11}=0,\quad \tilde{G}_{12}=\frac{\mathcal{A}^{(2)}-\mathcal{A}^{(1)}}{2},\quad \tilde{G}_{13}=0,$$
which verifies the equality for $j= 2$.
Hence we conclude that $\mathcal{A}^{(2)}-\mathcal{A}^{(1)}:= c_{\mathcal{A}}$ is a constant.

We can also appropriately choose $f^{(j)}$ to show that
$$\int(\mathcal{C}^{(2)}-\mathcal{C}^{(1)})\partial_j\psi= 0,
\quad (1\leq j\leq 3).$$
For instance, if we choose $i=1$,
$f^{(1)}= (0, \varphi, 0)$ and $f^{(2)}= (\phi, 0, 0)$ where $\phi$ is defined as before and $\varphi\in C^\infty_c(\Omega)$ satisfies $\varphi= x_2$ on $\mathrm{supp}\,\psi$ for a chosen $\psi\in C^\infty_c(\Omega)$, a direct computation shows that
$$\tilde{G}_{11}= 2(\mathcal{C}^{(2)}-\mathcal{C}^{(1)}),\quad \tilde{G}_{12}=0,\quad \tilde{G}_{13}=0,$$
which verifies the equality for $j= 1$.
Hence we conclude that $\mathcal{C}^{(2)}-\mathcal{C}^{(1)}:= c_{\mathcal{C}}$ is a constant.

Now we show that $c_{\mathcal{A}}$ and $c_{\mathcal{C}}$ are zeros. 

In fact, we can choose $\psi\in C^\infty_c(\Omega)$ s.t. $||\partial_1 \psi||_{L^2}\neq ||\partial_2 \psi||_{L^2}$.

Then we can appropriately choose $f^{(j)}$ to obtain
$$(4c_{\mathcal{A}}+ 4c_{\mathcal{C}})||\partial_1 \psi||^2_{L^2}
+ c_{\mathcal{A}}||\partial_2 \psi||^2_{L^2}+ c_{\mathcal{A}}||\partial_3 \psi||^2_{L^2}= 0,$$
$$(4c_{\mathcal{A}}+ 4c_{\mathcal{C}})||\partial_2 \psi||^2_{L^2}
+ c_{\mathcal{A}}||\partial_3 \psi||^2_{L^2}+ c_{\mathcal{A}}||\partial_1 \psi||^2_{L^2}= 0,$$
$$(4c_{\mathcal{A}}+ 4c_{\mathcal{C}})||\partial_3 \psi||^2_{L^2}
+ c_{\mathcal{A}}||\partial_1 \psi||^2_{L^2}+ c_{\mathcal{A}}||\partial_2 \psi||^2_{L^2}= 0.$$
For instance, if we choose $i=1$,
$f^{(1)}= (\psi, 0, 0)$ and $f^{(2)}= (\phi, 0, 0)$ where $\phi$ is defined as before, a direct computation shows that
$$\tilde{G}_{11}= 2c_\mathcal{A}\partial_1\psi+ 2c_\mathcal{C}\partial_1\psi,\quad \tilde{G}_{12}= \frac{c_\mathcal{A}}{2}\partial_2\psi,\quad \tilde{G}_{13}= \frac{c_\mathcal{A}}{2}\partial_3\psi,$$
which verifies the first identity.
Now we combine the three identities to obtain $6c_\mathcal{A}+ 4c_\mathcal{C}= 0$. Then we combine the first two identities to obtain
$c_\mathcal{A}||\partial_1 \psi||^2_{L^2}= c_\mathcal{A}||\partial_2 \psi||^2_{L^2}$, which implies 
$c_\mathcal{A}= 0$ and thus $c_\mathcal{C}= 0$.

\end{proof}

\begin{remark}
The multiple-fold linearization procedure performed in the proof has been widely applied in solving inverse problems. For instance, see \cite{feizmohammadi2020inverse, krupchyk2020remark} for this approach for inverse problems for semilinear elliptic operators. Also see \cite{uhlmann2021inverse}
for this approach for an inverse problem for a nonlinear elastic wave operator.
\end{remark}

\bibliographystyle{plain}
{\small\bibliography{Reference5}}
\end{document}